\documentclass[reqno,12pt]{amsart}
\usepackage{color}
\usepackage{amsmath}
\usepackage{amsfonts}
\usepackage{amssymb}
\usepackage{amsthm}
\usepackage{newlfont}
\usepackage{graphicx}

\usepackage[margin=1in]{geometry}

\usepackage{tikz}




\newtheorem{thm}{\bfseries Theorem}[section]
\newtheorem{cor}[thm]{\bfseries Corollary}
\newtheorem{rem}[thm]{\bfseries Remark}
\newtheorem{prop}[thm]{\bfseries Proposition}
\newtheorem{lem}[thm]{\bfseries Lemma}

\newtheorem*{rem*}{\bfseries Remark}

\numberwithin{equation}{section}



\newcommand{\ed}{\end {document}}

%
%
\newcounter{smalllist}

%
%




\renewcommand*{\tilde}{\widetilde}

\title[Linearization]{On the gap property of a linearized NLS operator}
\author[Dong Li]{Dong Li}
\address[Dong Li]{Department of Mathematics, the  University
of Hong Kong, Hong Kong, China}%
\email{mathdl@hku.hk}
\author[Kai Yang]{Kai Yang}
\address[Kai Yang]
{School of Mathematics, Southeast University, Nanjing, Jiangsu Province, 211189, China}
\email{kaiyang@seu.edu.cn}

\begin{document}

\begin{abstract}
We consider a pair of linear operators corresponding to the linearization around the ground state soliton of the cubic nonlinear Schr\"odinger equation
in dimension three. We introduce a new {comparison-based} approach and rigorously prove that the interval $(0, 1]$ does not contain any eigenvalues  of these operators.  Furthermore we show the absence of resonances at the bottom of the essential spectrum. All the obtained results are for the fully non-radial case. The method can be adapted to many other
spectral problems.
\end{abstract}
\maketitle
\section{Introduction}

We consider the nonlinear Schr\"odinger equation for
$\psi = \psi(t,x): \mathbb R \times \mathbb R^3 \to \mathbb C$:
\begin{align}
i \partial_t \psi + \Delta \psi + |\psi|^2 \psi=0.
\end{align}
Plugging in the standing wave ansatz $\psi = e^{it} \phi(x)$, we obtain
\begin{align}
-\Delta \phi + \phi - |\phi|^2 \phi=0. 
\end{align}
Denote by $Q$ the {unique} positive radial ground state {(cf. \cite{C, K})}. We have $Q(x) = Q(r)$, ($r=|x|$),\footnote{We shall slightly abuse
the notation and regard $Q(x)=Q(|x|)=Q(r)$ when there is no obvious confusion.} where $Q$
solves the nonlinear ODE 
\begin{align}\label{ode}
-Q^{\prime\prime}(r) - \frac 2 r Q^{\prime}(r) + Q(r) - Q^3(r) =0.
\end{align}
Consider $\phi = Q + \eta$ with $\eta=\eta_1 + i \eta_2$. Clearly
\begin{align}
 & \Delta \phi - \phi + |\phi|^2 \phi \notag \\
 =& \Delta \eta - \eta + (Q^2 + 2 Q \eta_1) ( Q + \eta_1+ i\eta_2)  - Q^3 +\mathcal O(|\eta|^2) 
 \notag \\
 =&\; L_+ \eta_1 +i L_- \eta_2 + \mathcal O(|\eta|^2),
 \end{align}
 where $L_+ = -\Delta +1 -3Q^2$, $L_-= -\Delta+1 -Q^2$. 

It is known that the essential spectrum of $L_+$ and $L_-$ is $[1, \infty)$.  $L_+$ has
a unique negative bound state. If $f \perp \Delta Q$, then  (below $\langle\cdot, \cdot\rangle$
denotes the usual $L^2$-inner product for real-valued functions)
\begin{align}
\langle L_+ f , f \rangle \gtrsim   (\int_{\mathbb R^3} f Q dx )^2.
\end{align}
The kernel of $L_+$ is $\textrm{span}\{ \partial_j Q \}_{j=1}^3$ and the kernel of $L_-$ is
$\textrm{span}\{Q\}$ (cf. \cite{Wein}). On the other hand, it has long been accepted wisdom (cf. \cite{F74, P01, DS, S2}) that $L_+$ and $L_-$ have no eigenvalues in $(0,1]$ and that $1$ is not a resonance of either operator--a property known as the gap property.
 This gap property plays an important role in the construction of stable manifolds
for orbitally unstable NLS (cf. \cite{NS} and \cite{S2}). It was numerically verified by Demanet and Schlag
in \cite{DS}  using the Birman-Schwinger method for NLS with nonlinearities $|\psi|^{2\beta}
\psi$, $\beta_*<\beta\le 1$, $\beta_*\approx 0.913958905$.  

In their recent work \cite{CHS}, Costin, Huang, and Schlag gave a rigorous proof of the gap property under the assumption of radial symmetry. The main contributions of \cite{CHS} are twofold.

First, they constructed a remarkably accurate approximate ground state $\tilde Q$, which differs from the true ground state by less than $10^{-4}$. More precisely, the pointwise error is bounded in absolute value by $7\cdot 10^{-5} \cdot \frac{1}{1+r} e^{-r}$.

Second, they developed a robust Wronskian-based approach that connects two Jost-type quasi-solutions: one starting from $r=0$ and the other from $r=\infty$ (the decaying solution at infinity).

The key step is verifying that 
\[
\inf_{\lambda \in [0,1]} |W(\lambda)| > 0 \quad \text{for $L_+$}, 
\qquad 
\inf_{\lambda \in [0,1]} |W(\lambda)/\lambda| > 0 \quad \text{for $L_-$},
\]
where $\lambda$ is the spectral parameter. This highly nontrivial computation was carried out in \cite{CHS} to establish the gap property in the radial setting.

The purpose of this paper is to give a rigorous proof of this gap property for the fully non-radial case.
We shall develop a new \textsf{comparison-based} approach which offers an interesting
(and perhaps simpler)
alternative to the Wronskian strategy developed in \cite{CHS}.  

\begin{thm} \label{thMain}
Consider
\begin{align}
L_+=-\Delta +1 -3Q^2, \qquad L_-=-\Delta +1-Q^2,
\end{align}
where $Q$ is the unique positive radial ground state solving  \eqref{ode}.

The operators $L_+$ and $L_-$ do not have any  eigenvalues in $(0,1]$. 
For the eigenvalue $\lambda=0$, the kernel of $L_+$ is $\mathrm{span}\{\partial_j Q\}_{j=1}^3$,
and the kernel of $L_-$ is $\mathrm{span}\{Q \}$.  Furthermore the threshold $\lambda=1$
is not a resonance for either $L_+$ or $L_-$.
\end{thm}

\begin{rem}
As in \cite{S2}, $\alpha$ is called a resonance of  the self-adjoint
operator $L_{+}$ (resp. $L_-$)
if there exists a nontrivial function $f$   such that   $L_{+} f=\alpha f$ 
(resp. $L_-f=\alpha f$) such that $f\notin  L^2(\mathbb{R}^3 )$ and
\begin{equation} \notag
\int_{\mathbb{R}^3}|f(x)|^2(1+|x|)^{-s}\;dx<\infty,\;\;\;\textit{for all}\;\; s>1.
\end{equation} 
\end{rem}

\begin{rem}
As expected, some nontrivial information about the ground state $Q$ enters the spectral analysis. To minimize technical complications, we employ the approximate solution $\tilde Q$ from \cite{CHS}  as a tool for extracting sharp pointwise estimates. The construction of alternative high-precision approximations of $Q$ with controlled and fully rigorous error estimates is certainly possible, but we shall not delve into this matter here.
\end{rem}

With the help of Theorem \ref{thMain}, we obtain the following full characterization of the spectrum of $L_\pm$.

\begin{cor}[Full characterization of the spectrum of $L_{\pm}$] \label{corT1}
We have $$\sigma_{\mathrm{ess}}(L_+)=[1, \infty) = \sigma_{\mathrm{ess}}(L_-).$$
The edge $\lambda=1$ is not an eigenvalue or a resonance for either $L_+$ or $L_-$. For $L_+$ we have
\begin{align}
\sigma_{\mathrm{dis}}(L_+)= \{  -\gamma, \, 0 \},
\end{align}
where $-\gamma<0$ is simple with corresponding eigenfunction being radial, $C^{\infty}$ and having exponential decay.
The kernel of $L_+$ is $\mathrm{span} \{\nabla Q\}$.  For $L_-$ we have $\sigma_{\mathrm{dis}}(L_-) = \{0\}$ with 
the kernel of $L_-$ being $\mathrm{span} \{Q \}$.  The resolvents of $L_\pm$ are given by
\begin{align}
& \rho(L_+) = \mathbb C \setminus \Bigl( \{ 0 \} \cup \{ -\gamma \} \cup [1, \infty)  \Bigr); \\
& \rho(L_-) = \mathbb C \setminus \Bigl( \{ 0 \}\cup [1, \infty)  \Bigr).
\end{align}
\end{cor}
A schematic plot of the spectrum of $L_{\pm}$ can be found in the figure below.
\begin{center}
\begin{tikzpicture}
\draw        (-5,0)  node[above]{$\sigma(L_+)$};
\draw (-5,0) edge[thick,dashed,-latex] node[at end,above]{$\mathbb{R}$} (5,0);
 \draw     (-1.8,0) node[black, scale=1] {\textbullet} node[above, black!80]{$-\gamma$};
 \draw        (-1.8,0)  node[below]{ simple};
\draw      (0,0) node[black, scale=1] {\textbullet} node[above]{$0$};
\draw        (0,0)  node[below]{$\{\nabla Q$\}};
\draw        (1,0)  node[above]{$1$};
\draw        (3,0)  node[below]{essential spectrum};
\draw[-stealth, black,  line width=0.06cm] (1,0) -- (5,0);

\draw        (-5,-2)  node[above]{$\sigma(L_-)$};
\draw (-5,-2) edge[thick,dashed,-latex] node[at end,above]{$\mathbb{R}$} (5,-2);

\draw      (0,-2) node[black, scale=1] {\textbullet} node[above]{$0$};
\draw        (0,-2)  node[below]{$\{ Q$\}};
\draw        (1,-2)  node[above]{$1$};
\draw        (3,-2)  node[below]{essential spectrum};
\draw[-stealth, black,  line width=0.06cm] (1,-2) -- (5,-2);
\end{tikzpicture}
\end{center}

We now explain the main steps of the proof of Theorem \ref{thMain}. Consider first the operator $L_+$ and the equation
$L_+ u = \lambda u$.
The task is to show for $\lambda \in (0,1]$ the above equation admits no solution in $L^2(\mathbb R^3)$. To do this, we argue by contradiction and assume that there is an $L^2$ solution for
some $\lambda \in (0,1]$. By standard elliptic theory, it follows that $u \in H^m(\mathbb R^3)$
for all $m\ge 1$. In particular $u$ admits a rapidly convergent spherical harmonic expansion
\begin{align}
u = \sum_{l=0}^{\infty} \sum_{|m|\le l} R_{ml}(r) Y_{l,m} (\theta,\phi),
\end{align}
where $R_{ml}(r) =
 \int_{\mathbb S^2} \overline{Y_{l,m}(\theta,\phi)} u(x) d\sigma  $
and 
$Y_{l,m}$ are the $L^2(\mathbb S^2)$-normalized spherical harmonics, namely (cf. \cite{Sphe}) for $0\le \theta \le \pi$, $0\le \phi\le 2\pi$,
\begin{align}
Y_{l,m} (\theta, \phi)  & = \left( \frac {(l-m)! (2l+1) } {4\pi (l+m)! } \right)^{\frac 12} e^{im \phi} P_l^m(\cos \theta)
\notag \\
& = \frac{(-1)^{l+m} }{2^l l!}
\left( \frac {(l-m)! (2l+1)} {4\pi (l+m)!} \right)^{\frac 12} e^{im\phi}
(\sin \theta)^m \left( \frac {d} {d(\cos \theta) } \right)^{l+m} (\sin \theta)^{2l}.
\end{align}

\begin{rem} \label{rem_Expand}
Since $u$ is $C^{\infty}$-smooth, by using the Taylor expansion $u(x) = \sum_{|\alpha|\le k_0}
C_{\alpha} x^{\alpha} + \mathcal O(|x|^{k_0+1})$ and the formula for $R_{ml}$, one can infer
that $R_{ml}(r)$ has a regular local expansion 
when $r\to 0^+$.  For example consider the case $l=1$ and the coefficient for the spherical harmonic $x_1/r$. One observes that 
\begin{align} 
 & \int_{\mathbb S^2} \Bigl( \sum_{|\alpha| \le 2N+3} C_{\alpha} x^{\alpha} \Bigr)  \frac {x_1}  r d \sigma \notag \\
 =& c_1 r+ c_3 r^3 +\cdots +c_{2N+3} r^{2N+3} .
\end{align} 
where all even powers of $r$ vanish due to parity.   As a result we can infer
\begin{align}
R_{m1} (r) = c_1 r+c_3 r^3 + \mathcal O(r^5), \qquad \text{as $r\to 0^+$}.
\end{align}
This simple yet important 
observation will be used when we classify the corresponding solutions having regular behaviour
when $r\to 0^+$. 
\end{rem}

By using the aforementioned spherical harmonics expansion, we are led to the following set of equations arranged in ascending
order of degree of the spherical harmonics:
\begin{align}\label{eLge2}
\begin{cases}
\text{$l=0$}: \quad (-\partial_{rr} -\frac 2 r \partial_r +1-\lambda -3Q^2) R_0=0;  \\
\text{$l=1$}: \quad (-\partial_{rr}-\frac 2 r \partial_r +\frac 2 {r^2} +1-\lambda
-3Q^2) R_1=0;  \\
\text{$l\ge 2$}: \quad ( 
-\partial_{rr}-\frac 2 r \partial_r +\frac {l(l+1)} {r^2} +1-\lambda
-3Q^2) R_l=0. 
\end{cases}
\end{align}
Here $R_0$, $R_1$ and $R_l$ are real-valued\footnote{Note that the equations for $R_{ml}$ depend on
the index $l$ only. Hence we suppress the notational dependence on $m$. Also it is enough to work
with real-valued functions by taking separately real and imaginary parts.}
 functions of $r$ only.  
 
From the discussion in Remark \ref{rem_Expand} and the fact that $0$ is a regular singular point of $R_j\,(j\ge 0)$, we have
\begin{align}\label{expand}
\begin{cases}
\mbox{1) $R_j \in C^{\infty}((0,\infty))$ and $R_j \in L^2([0,\infty), {r^2}dr)$;} \\
\mbox{2) $R_j$ has a regular local expansion when $r\to 0^+$.}
\end{cases}
\end{align}

We discuss several cases.

\textsf{The case $l\ge 2$}.  By using
 the pointwise inequality
$\frac {6} {r^2} > 3Q^2(r)$, $\forall\, r>0$ (see Lemma \ref{lem_Qa1}), we rule out
any nontrivial solution to \eqref{eLge2} in $L^2([0,\infty), {r^2}dr)$. 

\textsf{The case $l=0$}. 
Denote $\epsilon=1-\lambda$, $t=r$ and $F_{\epsilon}(t) =t R_0(t)$.
It suffices to consider 
\begin{align}
\begin{cases}
F_{\epsilon}^{\prime\prime} = (\epsilon -3 Q^2) F_{\epsilon}, \quad t>0; \\
F_{\epsilon}(0)=0, \quad F_{\epsilon}^{\prime}(0)=-1.
\end{cases}
\end{align}
By a comparison argument (see Lemma \ref{lem6.1a}), we show that $F_{\epsilon}$ must change sign at least once, and the 
first positive zero $t_{\epsilon}$ of $F_{\epsilon}$ satisfies
\begin{align}
t_{\epsilon} \ge t_0>0,
\end{align}
where $t_0$ is the first positive zero of $F_0$.  We then focus on analyzing the behavior of the solution
after its first positive zero. For this it is enough to study the time-shifted equation
\begin{align}
\begin{cases}
\tilde F_{\epsilon}^{\prime\prime} = (\epsilon- 3 Q^2(t+t_{\epsilon} ) ) \tilde F_{\epsilon},\quad
t>0;\\
\tilde F_{\epsilon}(0)=0, \quad \tilde F_{\epsilon}^{\prime}(0)=1.
\end{cases}
\end{align}
Introduce $q$ solving
\begin{align}
\begin{cases}
q^{\prime\prime} = -3Q^2(t+t_0) q, \quad t>0; \\
q(0)=0, \quad q^{\prime}(0)=1.
\end{cases}
\end{align}
We show via comparison arguments (see  Theorem \ref{thm6.1a}) that $q(t)$ is positive for $t>0$, and $q(t)/t$ stays bounded from below  by a positive constant for $t\in [1,\infty)$. 
Thanks to another comparison argument, we deduce $\tilde F_{\epsilon}(t) \ge q(t)$
for all $t>0$. This yields the desired conclusion for $l=0$.  Quite interestingly, in some sense
we are able to reduce the original $\lambda$-dependent problem to the study of the $\lambda=1$
case.

\textsf{The case $l=1$}.  This is an involved case since 
for $\lambda =0$,  $R=-Q^{\prime}(r)$ solves the equation
\begin{align}
(-\partial_{rr}-\frac 2 r \partial_r +\frac 2 {r^2} +1
-3Q^2) R=0.
\end{align}
If one adopts the Wronskian strategy in this case then one must deal with\footnote{One 
possible fix is to  work with $W(\lambda)/\lambda$.} the degeneracy of
$W(\lambda)$ as $\lambda \to 0$.  In our \textsf{comparison-based} approach, we first use
a local analysis together with suitable normalization to deduce that
\begin{align}
\mathrm{const} \cdot R_1(r) =  r+
{0.1} (1-\lambda-3Q^2(0))r^3 +\mathcal O(r^5), \quad \text{as $r\to 0^+$}.
\end{align}
Denote $t=r$ and $F_{\lambda}(t) =\mathrm{const}\cdot t R_1(t)$. Then 
$F_{\lambda}$ solves
\begin{align}
F_{\lambda}^{\prime\prime} = (1-\lambda + \frac 2 {t^2}
-3 Q^2(t) ) F_{\lambda}, \qquad 0<t<\infty;
\end{align}
and $ F_{\lambda}(t) = t^2 + {0.1}(1-\lambda-3Q^2(0) ) t^4 + \mathcal O(t^6)$, as $t\to 0^+$. 
By a comparison argument (see Proposition \ref{prop3.1a}), we show that $F_{\lambda}$ must change its sign and  the first positive zero $t_0$ of $F_{\lambda}$ satisfies $t_0\ge 0.2$. It then suffices 
to study the solution for $t\ge t_0$. In Proposition \ref{prop4.1a} we show via a further {comparison}
argument that the
corresponding solution must grow in time. 

The above concludes the analysis for the operator $L_+$.  For $L_-$ the analysis is similar and slightly
simpler.
The governing equations are
\begin{align}
&\text{$l=0$}: \quad (-\partial_{rr} -\frac 2 r \partial_r +1-\lambda -Q^2) R_0=0; \label{eNL0}\\
&\text{$l\ge 1$}: \quad ( 
-\partial_{rr}-\frac 2 r \partial_r +\frac {l(l+1)} {r^2} +1-\lambda
-Q^2) R_l=0. \label{eNL1}
\end{align}
By Lemma \ref{lem_Qa1}, we have 
$\frac{2}{r^2} >Q^2(r)$ for all $r>0$. Thus the equation
\eqref{eNL1} does not admit any nontrivial solution in $L^2([0, \infty), {r^2} dr)$. 
For \eqref{eNL0} we show in Theorem \ref{thm7.1a} that it does not admit any nontrivial
$L^2([0,\infty), {r^2}dr)$ solution for $\lambda \in (0,1]$.  The overall strategy is similar to the $L_+$ case.

The rest of this paper is organized as follows. In Section \ref{sec_sturm}, we recall a basic Sturm-type comparison lemma for ODEs and collect several useful properties of the ground state $Q$. Section \ref{sec_pf} is devoted to the proofs of Theorem \ref{thMain} and Corollary \ref{corT1}. The technical estimates used in the proof of Theorem \ref{thMain} for $L_+$ are presented in Sections \ref{sec_4} through \ref{sec_6}. Section \ref{sec_Lminus} delineates the modifications necessary for treating the operator $L_-$. Auxiliary technical estimates are gathered in the appendices.

\subsection*{Notation.} The following notation will be used in this paper.

\begin{itemize}

\item For a nonnegative quantity $Y$, we write $X=\mathcal O (Y)$ if $|X| \le C Y$ for some positive constant $C$.
For any two positive quantities $X$ and $Y$, we write $X\lesssim Y$
or $Y\gtrsim X$
 if $X\le CY$ for some positive constant $C$.
We write $X\lesssim_{Z_1,Z_2,\cdots Z_k} Y$ if $X \le CY$ and the constant $C$ depends on the quantities
$Z_1,\cdots, Z_k$.

\item We define $$
\eta_0(t):=7\cdot 10^{-5}\;\frac{e^{-t}}{1+t}.
$$

\item For simplicity of presentation, decimals (e.g., 0.45) denote exact rational numbers
(e.g., 45/100). All computations in this work are performed using exact rational arithmetic which is fully rigorous. The use of floating-point numbers is strictly avoided to ensure complete rigor.

\end{itemize}

\subsection*{Acknowledgement}
D. Li is supported in part by HKRGC 17310825 and  NSFC 12271236. K. Yang is supported by the Jiangsu Shuang
Chuang Doctoral Plan and the Jiangsu Scientific Research Center of Applied Maths under Grant BK20233002. 
We  thank the anonymous referees for
their helpful comments and suggestions.

\section{Sturm comparison and properties of the ground state $Q$}\label{sec_sturm}
We record the following variation of the standard Sturm-type comparison lemma.
We include a simple
proof for the sake of completeness.
\begin{lem}[Sturm comparison, \cite{Ince, K}] \label{lem2.1a}
Let $0<l_0< \infty$. Suppose $G=G(t)$, $g=g(t)$: $[0,l_0] \to \mathbb R$ are Lipschitz functions satisfying
\begin{align}
G(t) \ge g(t), \qquad \forall\, 0\le t \le l_0.
\end{align}
Assume $F$, $f$ are $C^2([0,l_0])$ functions satisfying 
\begin{align}
\begin{cases}
F^{\prime\prime} = G F,  \quad 0<t<l_0;\\
f^{\prime\prime} = g f, \quad 0<t<l_0;\\
F(0)=f(0)\ge 0, \quad F^{\prime}(0)\ge f^{\prime}(0),
\end{cases}
\end{align}
and $f(t) >0$ for all $0<t <l_0$. Then 
\begin{align}
F(t) \ge f(t)>0,\;\;\;{ \frac {F^{\prime}(t)} {F(t) }\geq \frac {f^{\prime}(t)} {f(t) },} \qquad \forall\, 0<t < l_0.
\end{align}
If in addition $F(0)=f(0)>0$ and $F^{\prime}(0)> f^{\prime}(0)$, then we have the strict inequality, namely
\begin{align}
F(t)>f(t)>0, \quad \frac{F^{\prime}(t)} {F(t)} >
\frac{f^{\prime}(t)}{f(t)},  \qquad \forall\, 0<t<l_0.
\end{align}
If $F(0)=f(0)=0$ and $F^{\prime}(0) > f^{\prime}(0)$, then
\begin{align}
F(t)>f(t)>0, \qquad \forall\, 0<t<l_0.
\end{align}
\end{lem}
\begin{rem}
The same conclusion holds if $f(t)>0$ for all $0\le t<l_0$ and
\begin{align}
F(0)\ge f(0)>0, \qquad
\Bigl( \frac {F^{\prime}} {F }  -\frac {f^{\prime} }f \Bigr)\Bigr|_{t=0} \ge 0.
\end{align}
\end{rem}

\begin{proof}
We sketch the (standard) argument. 
Consider first the case $F(0)=f(0)>0$. Note that for this case it is enough to prove the theorem
under the assumption that $F(t)>0$ for all $0<t<l_0$. Once this is proved, the general
case follows by a simple bootstrapping argument. 

Denote $R=R(t) = \frac {F^{\prime}(t)} {F(t)}$, $r=r(t) =\frac {f^{\prime}(t) } {f(t) }$.
Clearly $(R-r)\Bigr|_{t=0} \ge 0$.  Then
\begin{align}
(R-r)^{\prime} &=
\frac {F^{\prime\prime} F - (F^{\prime})^2} {F^2}
- \frac {f^{\prime\prime} f - (f^{\prime})^2} {f^2} \\
& = G- g  - R^2+r^2 \\
& \ge - (R+r) (R-r).
\end{align}
Integrating in time then yields that $R-r\ge 0$ for all $t$.  Thus
\begin{align}
R-r = \Bigl( \log \frac {F(t)} {f(t)} \Bigr)^{\prime} \ge 0.
\end{align}
Thus $F(t) \ge f(t)$ for all $0\le t< l_0$.

Next consider the case $f(0)=0$.  Clearly $F^{\prime}(0)\ge f^{\prime}(0)>0$ (if $f^{\prime}(0)=0$
then $f\equiv 0$ which is a contradiction to the assumption $f>0$ for $t>0$).  By continuity we have
$F(t)>0$ for $t \in (0,t_1)$, where $t_1>0$ is sufficiently small.

Define for $\epsilon>0$,  $f_{\epsilon}=f+\epsilon$.  Let $F_{\epsilon}$ solve
\begin{align}
\begin{cases}
F^{\prime\prime}_{\epsilon} = G \frac f {f_{\epsilon}} F_{\epsilon}, \\
F_{\epsilon}(0)=\epsilon, \qquad F_{\epsilon}^{\prime}(0)=F^{\prime}(0)
\end{cases};
\qquad
\begin{cases}
f_{\epsilon}^{\prime\prime} = g \frac f {f_{\epsilon}} f_{\epsilon},\\
f_{\epsilon}(0) =\epsilon, \qquad f_{\epsilon}^{\prime}(0)=f^{\prime}(0).
\end{cases}
\end{align}
Apparently, 
\begin{align}
F_{\epsilon}(t) \ge f_{\epsilon}(t) >0, \qquad
\frac{ F_{\epsilon}^{\prime} (t) } {F_{\epsilon}(t) }  \ge 
\frac {f_{\epsilon}^{\prime}(t)} {f_{\epsilon}(t)}, \qquad\forall\, 0<t< l_0.
\end{align}
Sending $\epsilon\to 0^{+}$ then yields the desired inequality (note 
that\footnote{Observe that $\epsilon/f_{\epsilon}(t) \lesssim \frac{\epsilon} {\epsilon +t}$ for $t\lesssim 1$
since $f(0)=0$, $f^{\prime}(0)>0$.}
$F_{\epsilon}(t) \to F(t)$ and $F_{\epsilon}^{\prime}(t) \to F^{\prime}(t)$ as $\epsilon \to 0^+$ for each $t$).

Finally if $F^{\prime}(0)>f^{\prime}(0)$ and $F(0)=f(0)>0$,
 the strict inequality follows easily from
the fact that $R(t)-r(t) >0$ for all $0\le t<l_0$.

If $F^{\prime}(0)>f^{\prime}(0)$ and $F(0)=f(0)=0$, then  using $R-r=(\log \frac {F(t) } {f(t)} )^{\prime}\ge 0$,
we obtain
\begin{align}
\frac {F(t)} {f(t)} 
\ge \lim_{s \to 0^+} \frac {F(s) }{f(s)} = \frac {F^{\prime}(0)} {f^{\prime} (0) }>1.
\end{align}
Thus $F(t) >f(t)>0$ for all $0<t<l_0$.
\end{proof}

The following lemma collects a few important properties of $Q$ which will be needed later.
In the proof we shall use the Costin-Huang-Schlag approximate solution $\tilde Q$ whose properties
are reviewed in Appendix \ref{AppQ1}.

\begin{lem}[Properties of $Q$]   \label{lem_Qa1}
The following hold.
\begin{enumerate}
\item The Lyapunov functional (cf. \cite{C}) $E(t)=\frac{Q'(t)^2}{2}-\frac{Q(t)^2}{2}+\frac{Q(t)^4}{4}$ is decreasing in $t$.  We have the derivative estimate
$|Q^{\prime}(t)| \le \sqrt{ (Q^{\prime}(s))^2 + \frac 12 Q(s)^4}$ for any $0\le s \le t<\infty$.
In particular $|Q^{\prime}(t)| \le \frac 1 {\sqrt 2} Q(0)^2$. 
\item For all $t>0$, $\frac{Q'(t)}{t}$ is increasing in $t$. Also  $|Q''(t)|< 81.7$ for all $t\ge 0$. 
\item $\frac 2 {t^2} - 3 Q^2(t)>0$ if $0<t\le 0.2$ or $t\ge  1.2$. Also
$\frac 2 {t^2} - 3 Q^2(t)> -13.64$, for any $t\in [0.2, 1.2]$, and
$\frac{2}{t^2}-3 Q^2(t)\ge -6.04$ for any  $t\in [0.625,1.2]$.
\item $\frac {6} {t^2}  -3Q^2 >0$ for any $t>0$.

\end{enumerate}
\end{lem}
\begin{proof}[Proof of Lemma \ref{lem_Qa1}]
(1) This follows from the identity
\begin{align} \notag
E^{\prime}(t)=-\frac 2 t (Q^{\prime}(t) )^2.
\end{align}

(2) Clearly
\begin{align} \notag
\frac d {dt} (\frac 1 t Q^{\prime}) = -\frac 1 {t^2} Q^{\prime}+\frac 1 t Q^{\prime\prime}
= - 3 t^{-2} Q^{\prime} + t^{-1} (Q-Q^3).
\end{align}
Let $t_1>0$ be the unique point such that $Q(t_1)=1$, where the uniqueness of $t_1$ follows from the strict monotonicity of $Q$ in \cite{C}. Clearly $Q-Q^3>0$ for any $t>t_1$.  Thus
we only need to show $(t^{-1} Q^{\prime})^{\prime} >0$ for $0\le t \le t_1$. To this end, note that
\begin{align} 
  (t^2 Q^{\prime} )^{\prime} &= t^2 (Q-Q^3)  \notag \\
  \Rightarrow  t^2 Q^{\prime}(t)  &= \int_0^t s^2 (Q(s) -Q(s)^3) ds \notag \\
  & < \int_0^t s^2 ds (Q(t) -Q(t)^3)  \qquad (\text{we used $(Q-Q^3)^{\prime}=
  (1-3Q^2) Q^{\prime}>0$ for $s \in [0, t_1]$}) \notag \\
  & =\frac 13 t^3 (Q(t)-Q(t)^3).
  \end{align}
  Thus $(t^{-1} Q^{\prime})^{\prime} >0$ for $t\in [0,t_1]$ as well.

Since $\frac{Q' }{t}$ is increasing when $t>0$, we have 
$$
-Q(0)^3\le Q''(t)=Q(t)-Q(t)^3-\frac{2Q'(t)}{t}\le Q(0)-2Q''(0)=Q(0)-\frac{2}{3} (Q(0)-Q(0)^3).
$$
Thus, $|Q''(t)|\le (\tilde Q(0)+\eta_0(0))^3<81.7.$

(3) For $0<t\le 0.2$, thanks to the explicit expression of $\tilde Q(t)$, one can check  that
\begin{align}
\frac 2 {t^2} - 3 (\tilde Q(t))^2 >2.
\end{align}
Indeed, on $(0,0.18]$, 
\begin{align*}
\frac 2 {t^2} - 3 (\tilde Q(t))^2-2\geq \frac 2 {(\tfrac{18}{100})^2} - 3 (\tilde Q(0))^2-2>1;
\end{align*} 
On $[0.18, 0.2]$, 
\begin{align*}
\frac 2 {t^2} - 3 (\tilde Q(t))^2-2\geq \frac 2 {(\tfrac{1}{5})^2} - 3 (\tilde Q(\tfrac{18}{100}))^2-2>1.
\end{align*}

Denote $\mathcal{E}=  Q - \tilde Q$ and recall that $\|\mathcal{E}\|_{\infty}<7\times 10^{-5}$ by \eqref{D6}, we have
\begin{align} \label{bd0.1o}
|3Q^2 -3 (\tilde Q)^2 | \le 3\|\mathcal{E} \|_{\infty}^2 +6|\tilde Q(0) |  \cdot \|\mathcal{E} \|_{\infty}<\tfrac{11211}{6154000}<1.
\end{align}  
Thus $2 t^{-2} -3 Q^2>0$ for $0<t\le 0.2$. 

By Proposition \ref{prop_Qs1}, we have 
\begin{align}
\frac{187}{69} \frac {e^{-t}} t < \tilde Q (t) < \frac{350}{129} \frac {e^{-t}}t, 
\quad \forall\, t\ge 2.5; \quad |Q(t) -\tilde Q(t) | \le 7\cdot 10^{-5}\cdot \frac {e^{-t}}{1+t}=:\eta_0(t), \quad\forall\, t\ge 0.
\end{align}
It is not difficult to verify for $t\ge 2.5$, 
\begin{align} \label{4.7aa}
\frac 2 {t^2} - 3 \cdot ( \frac{350}{129} \cdot \frac 1 t e^{-t} + \eta_0(t))^2 >\frac 2 {t^2} - 3 \cdot ( \frac{351}{129} \cdot \frac 1 t e^{-t})^2> \frac{1} {t^2}.
\end{align}
Thus $2 t^{-2} -3Q^2>0$  for $t\ge 2.5$.

For the regime $1.2\le t \le 2.5$ we resort to rigorous computation. We partition the  interval $[1.2,2.5]$ into $N=130$ intervals $[t_i,t_{i+1}]$ with equal length $t_{i+1}-t_i =\frac{1}{100}$ ($0\le i \le  N-1$). From \eqref{bd0.1o}, it is straightforward to check (below we recall $\eta_0 =7\cdot 10^{-5} e^{-t} /(1+t)$)
$$
\min_{0\leq i\le N}\{\frac 2 {t_i^2}-3 Q^2(t_i)\}\ge \min_{0\le i\le N} \{\frac 2 {t_i^2}-3 (\tilde Q )^2(t_i)- \tfrac{11211}{6154000}\}\ge \tfrac{23983}{37847} \cdot 10^{-1}.
$$

Clearly, $$\max_{t\in[1.2,2.5]} \frac 4 {t^3}=\frac 4 {{1.2}^3} =\frac{125}{54}.$$ 
Since the Lyapunov functional $E(t)$ is decreasing, we have 
\begin{align}
\max_{t \in [1.2, 2.5]} |Q^{\prime}(t) | &\le \sqrt{ (Q^{\prime}(\tfrac{6}{5}) )^2 + \frac 12 Q(\tfrac{6}{5})^4 } \le \sqrt{ (\tilde Q^{\prime}(\tfrac{6}{5}) - 5\cdot \eta_0(\tfrac{6}{5}))^2+\frac 12 (\tilde Q(\tfrac{6}{5}) +\eta_0(\tfrac{6}{5}) )^4}\notag \\
 &  \le \sqrt{ (\tilde Q^{\prime}(\tfrac{6}{5}) - 5\cdot \eta_0(0))^2+\frac 12 (\tilde Q(\tfrac{6}{5}) +\eta_0(0) )^4}\leq \tfrac{1203}{1000}. 
\end{align}
Hence 
\begin{align}
&\max_{t\in[1.2,2.5]} (-6 Q  Q^{\prime} )\le 6 (\tilde Q(\tfrac{6}{5})+\eta_0(0)) \cdot  \tfrac{1203}{1000}\le \tfrac{88512}{18484}.
\end{align}

Note that (below we use $|a-b| \le \max\{a,b\}$ for $a>0$, $b>0$)
\begin{align*}
\max_{t\in[1.2,2.5]} |(\frac{2}{t^2}-3 Q^2(t))'|&=\max_{t\in[1.2,2.5]} | -\frac 4 {t^{3}}-6Q(t)Q'(t)|\le \max\{\max_{t\in[1.2,2.5]} \frac 4 {t^3},\max_{t\in[1.2,2.5]} -6 Q(t)Q'(t)\} \\
& \leq \max (\tfrac{125}{54},\tfrac{88512}{18484} )=\tfrac{88512}{18484}.
\end{align*}

Thanks to the above derivative estimate, we have for any $t\in[t_i,t_{i+1}]$ and $0\le i\le N-1$,
$$
\frac{2}{t^2}-3 Q^2(t)-(\frac{2}{t_i^2}-3 Q^2(t_i))
\geq -\tfrac{88512}{18484} \Delta t=-\tfrac{88512}{18484}\cdot 10^{-2}. $$
Thus, for $1.2\le t \le 2.5$, we have
$$
\frac{2}{t^2}-3 Q^2(t)\ge \min_{0\le i\le N}\{\frac 2 {t_i^2}-3 Q^2(t_i)\}-\tfrac{88512}{18484}\cdot 10^{-2}\ge \tfrac{23983}{37847}\cdot 10^{-1}-\tfrac{88512}{18484}\cdot 10^{-2}>0.
$$
Thus we obtain $2 t^{-2} -3Q^2(t)>0$ for $0<t\le 0.2$ or $t\ge 1.2$. The statement
$2 t^{-2}-3Q^2(t)>-13.64$ for any $t\in [0.2, 1.2]$, $2t^{-2} -3Q^2(t)>-6.04$ for any
$t\in [0.625, 1.2]$ can be checked along similar lines. We omit the details.

(4) The proof of this inequality is similar to those in (3). We omit the details.
\end{proof}

\section{Proof of Theorem \ref{thMain} and Corollary \ref{corT1}}\label{sec_pf}
In this section, we prove Theorem \ref{thMain} and Corollary \ref{corT1}. To streamline the presentation, the proof invokes several technical estimates that are established in later sections.

\begin{proof}[Proof of Theorem \ref{thMain}]
We discuss several cases.

 1) \textsf{Eigenvalue case of $L_+$.}
We consider the equation $L_+ f = \lambda f$ for $\lambda \in [0,1]$ and $f\in L^2(\mathbb R^3)$. By standard
elliptic estimates one has $f \in H^m(\mathbb R^3)$ for all integer $m\ge 1$. We decompose $f$  into spherical harmonics as in \eqref{eLge2}, satisfying \eqref{expand}.

\textsf{Subcase $l\ge 2$}.  
By Lemma \ref{lem_Qa1}, we have the pointwise bound (note that $l\ge 2$)
\begin{align} 
\frac {l(l+1) } {r^2} \ge \frac {6}{r^2} > 3Q^2(r), \qquad \forall\, r>0.
\end{align}
It follows that the above system cannot admit any nontrivial $L^2$ solution. In the following, we present an 
additional argument to rule out the resonance case.
By Remark \ref{rem_Expand}, it is not difficult to check that for $l \ge 2$:
\begin{align}
R_l(r)= \mathcal O(r^2), \quad R_l^{\prime}(r) = \mathcal O(r), \quad \text{as $r\to 0^+$}.
\end{align}
Denote $F(r)=r R_l(r)$ and $V(r)=\frac{l(l+1)} {r^2} +1-\lambda-3Q^2$. Note that 
$0<V(r) \lesssim 1+ r^{-2}$. Clearly
\begin{align}
 & F^{\prime\prime} =V F, \qquad r>0; \notag \\
 \Rightarrow & \frac 12 (F^2)^{\prime\prime} = (F^{\prime})^2 + V F^2, \quad r>0; \notag\\
 \Rightarrow & \frac 12 \int_0^{\infty} F^2 \psi_A^{\prime\prime} dr=
 \int_0^{\infty} ( (F^{\prime})^2 + V F^2) \psi_A dr,   \label{3.6Aa1}
 \end{align}
where $\psi_A(r) =\psi( \frac r A)$, and $\psi \in C_c^{\infty}(\mathbb R)$ is such that
$\psi(z)\equiv 1$ for $|z|\le 1$, $\psi(z)=0$ for $|z|\ge 2$, and $0\le \psi (z) \le 1$ for all $z \in \mathbb R$.
Note that the regularity $F(r)=\mathcal O(r^3)$, $F^{\prime}(r)=\mathcal O(r^2)$ as $r\to 0^+$ is used
when we derive $\int_0^{\infty} F^{\prime\prime} \psi_A dr = \int_0^{\infty} F \psi_A^{\prime\prime} dr$.
Taking $A\to \infty$ easily yields that $F\equiv 0$.  Note here in order to have  $\int_0^{\infty}
F^2 \psi_A^{\prime\prime} dr \to 0$ as $A\to \infty$, it suffices to require $ A^{-2} \int_{0.01 A
\le r \le 100 A} F^2(r) dr \to 0$. This is obviously fulfilled if
\begin{align} \label{3.7Aa1}
\int_{0}^{\infty} \frac {F^2(r)} {(1+r)^s} dr <\infty, \qquad \text{for all $s>1$}.
\end{align}

\textsf{Subcase $l=1$ and $\lambda=0$}.
In this case by Remark \ref{rem_Expand}, we have $R_1(r) = c_1 r+c_3r^3+\mathcal O(r^5)$ as
$r\to 0^+$.  Denote $F(r) =r R_1(r)$.  Note that
\begin{align}
F^{\prime\prime} = (\frac 2 {r^2} +1-3Q^2) F, \qquad r>0. \notag
\end{align}
One solution is given by $F_0(r) = -r Q^{\prime}(r)$. We shall prove uniqueness. Namely if $F(r)$ is another solution
with the asymptotics $F(r) = c_1 r^2 +c_3 r^4 +\mathcal O(r^6)$ as $r\to 0^+$, then let
$\theta(r)= F(r) + c_1 \frac 1 {Q^{\prime\prime}(0) } F_0(r)$. Obviously
\begin{align}
& \theta^{\prime\prime}= (\frac 2 {r^2} +1-3Q^2) \theta, \qquad r>0; \notag \\
&\theta(r) = \mathcal O(r^4), \qquad \text{as $r\to 0^+$}. \label{eq_The1}
\end{align}
By Proposition \ref{propA1a} in the appendix, we conclude $\theta \equiv 0$. Therefore 
$R_1(r)=-Q^{\prime}(r)$ is the only solution (up to a multiplicative constant). Note that
$R_1(r)=-Q^{\prime}(r)$ corresponds to the statement that the kernel of $L_+$
is $\mathrm{span}\{\nabla Q \}$.

\textsf{Subcase $l=1$ and $\lambda \in (0,1]$}. 
In this case denote $F_{\lambda} (r) =r R_1(r)$ and note that
\begin{align}
F_{\lambda}^{\prime\prime} = (\frac 2 {r^2} +1-\lambda -3Q^2) F_{\lambda}, \qquad r>0. \notag
\end{align}
By Remark \ref{rem_Expand}, we have $F_{\lambda}(r)= c_1 r^2+ c_3 r^4+\mathcal O(r^6)$ as
$r\to 0^+$.  By a similar reasoning as in \eqref{eq_The1}, we infer $c_1 \ne 0$ if $F_{\lambda}$ is not
identically zero. With no loss
we can take $c_1=1$ and work out $c_3=0.1(1-\lambda-3Q^2(0) )$.  At this point we can invoke 
Proposition \ref{prop3.1a} to conclude that $F_{\lambda}$ must change its sign and the first
positive zero $t_0$ satisfies $t_0>0.625$. By Proposition \ref{prop4.1a}, we conclude that 
$F_{\lambda}(r)$ must grow without bound as $r\to \infty$. Thus we arrive at a contradiction (since $F_{\lambda}$
is assumed to be in $L^2([0, \infty), dr)$) and $F_{\lambda}$
must be identically zero.

\textsf{Subcase $l=0$.}  In this case we denote $\epsilon=1-\lambda \in [0,1]$,
$F_{\epsilon}(r)= r R_0(r)$  and consider the equation\footnote{If $F_{\epsilon}^{\prime}(0)=0$, then
clearly $F_{\epsilon} \equiv 0$. We only need to rule out the case $F_{\epsilon}^{\prime}(0) \ne 0$.
Hence for simplicity we take $F_{\epsilon}^{\prime}(0)=-1$.}
\begin{align}
\begin{cases}
F_{\epsilon}^{\prime\prime} = (\epsilon -3 Q^2 ) F_{\epsilon}, \quad r>0;\\
F_{\epsilon}(0)=0, \; F_{\epsilon}^{\prime}(0)=-1.
\end{cases}
\end{align}
By Lemma \ref{lem6.1a}, the function $F_{\epsilon}$ must change its sign with its first positive zero
$t_{\epsilon}\ge t_0$ for all $\epsilon \in [0,1]$. Here $t_0$ is the first positive zero of $F_0$. 
By Theorem \ref{thm6.1a}, we conclude that $F_{\epsilon}$ must be growing as $r\to \infty$. 
Thus this case is settled.

2) \textsf{We show $\lambda=1$ is not a resonance for $L_+$}. 
We assume $L_+f =f$ where $f \not \in L^2(\mathbb R^3)$ and
\begin{equation}\notag
\int_{\mathbb{R}^3}|f(x)|^2(1+|x|)^{-s}\;dx<\infty,\;\;\;\textit{for all}\;\; s>1.
\end{equation} 
By examining the equation $-\Delta f =3Q^2 f$,  it is not difficult to check that  $\nabla f \in H^m(\mathbb R^3)$
for any integer $m\ge 1$.  In particular $f \in C^{\infty}(\mathbb R^3)$ and the asymptotics near $r\to 0^+$ still
holds in the sense of Remark \ref{rem_Expand}.  We then proceed to examine the coefficient of $f$ near each
spherical harmonics, namely the set of equations
\begin{align}
\text{$l\ge 0$}: \quad ( 
-\partial_{rr}-\frac 2 r \partial_r +\frac {l(l+1)} {r^2} -3Q^2) R_l=0,
\end{align}
where  $R_l\;(l\ge 0)$ are real-valued functions of $r$ only.   Each $R_l \in C^{\infty}( (0, \infty) )$ and
has regular expansion as $r\to 0^+$ in the sense of Remark \ref{rem_Expand}. Moreover,
$R_l$ satisfies the integrability condition:
\begin{equation}\label{res}
\int_{0}^{\infty}|R_l(r)|^2 r^2(1+r)^{-s}\;dr<\infty\;\;\;\; \textit{for all\;\;}s>1.
\end{equation}

\textsf{Subcase $l\ge 2$}.  This is already proved by the discussion around \eqref{3.7Aa1}. In this case
we must have $R_l\equiv 0$.

\textsf{Subcase $l=1$}.  This is again covered by Proposition \ref{prop4.1a}.  We have
$R_l \equiv 0$.

\textsf{Subcase $l=0$}. This is covered by Theorem \ref{thm6.1a}. We also have $R_0\equiv 0$ in this
case.

Thus $\lambda=1$ is not a resonance for $L_+$.

3) \textsf{Eigenvalue case of $L_-$.}  The discussion for $L_-$ is similar to that of $L_+$. We sketch
the details in Section \ref{sec_Lminus}.  In particular we use  Theorem \ref{thm7.1a} for $l=0$ and 
(\ref{L-2}) for $l\ge 1$. The fact that the kernel of $L_-$ is $\mathrm{span}\{Q \}$ is proved in Remark
\ref{rem7.1a} for $l=0$ together with the non-existence result for $l\ge 1$.

4) \textsf{$L_-$ has no resonance at the edge $\lambda=1$.} The discussion is similar to the $L_+$ case
and in fact simpler. For $l=0$ this is effectively proved by Theorem \ref{thm7.1a}. For $l\ge 1$ this is due
to \eqref{L-2}. 
\end{proof}

\begin{proof}[Proof of Corollary \ref{corT1}]
We focus on showing that the existence of a simple eigenvalue $-\gamma$
for $L_+$.  The situation for $L_-$ is simpler and omitted.

 It suffices to carry out the analysis of the equation $L_+ f= \lambda f$ for $\lambda<0$ and $f\in L^2(\mathbb R^3)$.  By standard
elliptic estimates one has $f \in H^m(\mathbb R^3)$ for all integer $m\ge 1$. We decompose $f$  into spherical harmonics as in \eqref{eLge2}, satisfying \eqref{expand}.

\textsf{Subcase $l\ge 2$.}  By the same analysis as in \eqref{3.6Aa1}, we obtain $R_l \equiv 0$ for $l\ge 2$.

\textsf{Subcase $l=1$.}  By Proposition \ref{prop3.2a}, if $F_{\lambda}(r)= r R_1(r)$ is nontrivial, then it 
must change its sign at some $t_0>0.625$.  By Proposition \ref{prop4.1a},  $F_{\lambda}$ must be growing
as $r\to \infty$ which is an obvious contradiction. Thus $R_1(r) \equiv 0$ in this subcase.

\textsf{Subcase $l=0$.}  In this case we define $F_{\lambda}(r)= r R_0(r)$ and consider the equation
\begin{align}
\begin{cases}
F_{\lambda}^{\prime\prime} = (1-\lambda-3Q^2) F_{\lambda}, \notag \\
F_{\lambda}(0)=0, \; F_{\lambda}^{\prime}(0)=1.
\end{cases}
\end{align}
By Proposition \ref{propA2b} in the appendix, we obtain that $-\gamma$ is the desired eigenvalue.
\end{proof}

\section{when $0<\lambda\le 1$, the  solution must change sign}\label{sec_4}
In this section we give the technical estimates for the $l=1$ case of $L_+$. 
\begin{lem} \label{lem3.1a}
Suppose $F$ is a smooth function solving the linear equation
\begin{align}
F^{\prime\prime} = (\frac 2 {t^2} -3 Q^2(t) ) F, \qquad 1\le t<\infty.
\end{align}
Then for some constants $c_1$, $c_2$ we have
\begin{align}
F(t) = c_1 (t^2 +\eta_1(t)) + c_2 ( \frac 1 t + \eta_2(t) ),
\end{align}
where $\eta_i(t)$ are smooth functions satisfying
\begin{align}
\sup_{1\le t <\infty} ( | e^t \eta_1(t) | + | e^t \eta_2(t)| )<\infty.
\end{align}
\end{lem}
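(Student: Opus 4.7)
My plan is to treat this equation as an exponentially small perturbation of the Euler equation $F^{\prime\prime}=\frac{2}{t^2}F$, whose fundamental solutions are $y_1(t)=t^2$ and $y_2(t)=1/t$ with constant Wronskian $W[y_1,y_2]=-3$. The crucial input is the exponential decay of the ground state: a standard analysis of the ODE $-y^{\prime\prime}-\frac{2}{r}y^{\prime}+y-y^3=0$ (via the substitution $u=ry$ and linearization near $y=0$) gives $Q(r)\le Ce^{-r}/r$ for $r\ge 1$, so the potential $3Q^2(t)$ decays like $t^{-2}e^{-2t}$. I would then construct two linearly independent solutions with the required asymptotic form and use the fact that the kernel of a linear second-order operator is two-dimensional to write every solution in the stated form.

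To build the ``large'' solution $F=t^2+\eta_1$, I would use variation of parameters with limits of integration pushed to $+\infty$ so that the correction vanishes at infinity:
\[
\eta_1(t)=t^2\int_t^{\infty}s^{-1}Q^2(s)\bigl(s^2+\eta_1(s)\bigr)\,ds-t^{-1}\int_t^{\infty}s^2Q^2(s)\bigl(s^2+\eta_1(s)\bigr)\,ds.
\]
Working in the Banach space $X=\{\eta\in C([1,\infty)):\|\eta\|:=\sup_{t\ge 1}e^t|\eta(t)|<\infty\}$, the pointwise bound on $Q^2$ gives
\[
\Bigl|t^2\int_t^{\infty}s^{-1}Q^2(s)s^2\,ds\Bigr|+\Bigl|t^{-1}\int_t^{\infty}s^2Q^2(s)s^2\,ds\Bigr|\lesssim te^{-2t},
\]
so the associated Picard map takes a small ball of $X$ into itself and is a contraction; the resulting $\eta_1$ satisfies $|\eta_1(t)|\lesssim te^{-2t}\lesssim e^{-t}$.

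The ``small'' solution $F=1/t+\eta_2$ is constructed by the analogous integral equation with the source $t^2$ replaced by $1/t$ inside the integrand. The same contraction argument, now with an even better factor of $s^{-1}$ in place of $s^2$ in the integrand, yields $|\eta_2(t)|\lesssim t^{-2}e^{-2t}\lesssim e^{-t}$. Smoothness of $\eta_i$ on $[1,\infty)$ is immediate from the smoothness of $F$, $y_1$, $y_2$ and $Q$. Since the leading behaviors $t^2$ and $1/t$ are linearly independent at $\infty$, the two constructed solutions span the two-dimensional solution space of $\partial_{tt}-(2/t^2-3Q^2)$, so every solution decomposes as $F(t)=c_1(t^2+\eta_1(t))+c_2(1/t+\eta_2(t))$.

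The main obstacle is the correct choice of integration limits in the Volterra setup: one must integrate from $t$ to $+\infty$ to prevent the polynomially growing fundamental solution $t^2$ from leaking back into the correction term, and one must then check that the kernel $\bigl(t^2/s-s^2/t\bigr)$—which is not uniformly small on the diagonal—is nevertheless tamed after being multiplied by the exponentially decaying factor $Q^2(s)$. Once the bound $Q(t)\lesssim e^{-t}$ is available, the quantitative estimates above follow by elementary integration, and the remainder of the argument is a routine fixed-point analysis.
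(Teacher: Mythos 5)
Your approach is essentially the same as the paper's: construct two particular solutions with asymptotics $t^2$ and $1/t$ by solving a Volterra-type integral equation via contraction mapping, then invoke the two-dimensionality of the solution space. The only structural difference is the choice of resolvent kernel: you use the variation-of-parameters kernel built from the Euler solutions $t^2,\,1/t$ (so that only $-3Q^2$ sits in the inhomogeneity), whereas the paper uses the free kernel $(s-t)$ from $F''=0$ and puts the whole term $\frac{2}{s^2}-3Q^2(s)$ into the inhomogeneity. Both are legitimate; yours has the small advantage that the Lipschitz part is driven purely by the exponentially decaying $Q^2$, whereas the paper must cope with the algebraically decaying $\frac{2}{s^2}$.

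However, there is a genuine gap in your contraction step. You assert that the Picard map is a contraction directly on the full interval $[1,\infty)$ in the norm $\|\eta\|=\sup_{t\ge 1}e^t|\eta(t)|$. The Lipschitz constant of the map is controlled by
\begin{align*}
\sup_{t\ge 1}\; e^t\Bigl(t^2\int_t^\infty \tfrac{Q^2(s)}{s}\,e^{-s}\,ds \;+\; \tfrac{1}{t}\int_t^\infty s^2 Q^2(s)\,e^{-s}\,ds\Bigr),
\end{align*}
and the dominant contribution comes from $t$ near $1$, where $Q$ is not small (indeed $Q(1)$ is of order $2$ and the constant $C$ in $Q(r)\le Ce^{-r}/r$ is of order several units, not $\ll 1$). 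Without an explicit numerical bound it is far from clear that this supremum is $<1$; my rough estimates put it near or above $1$. The paper anticipates this: it performs the contraction only on $[T_1,\infty)$ for $T_1$ large (where the relevant constant tends to $0$) and then extends the resulting solution backward to $[1,\infty)$ by solving the linear ODE — the global smooth extension is automatic because the equation is linear with smooth coefficients on $[1,\infty)$. You should either insert this ``shift to $[T_1,\infty)$ and extend backward'' step, or give an explicit quantitative bound showing the Lipschitz constant on $[1,\infty)$ is indeed below $1$; as written, the claim ``the associated Picard map takes a small ball of $X$ into itself and is a contraction'' on $[1,\infty)$ is unjustified.

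One minor inaccuracy: for the second solution your correction satisfies $|\eta_2(t)|\lesssim e^{-2t}$ rather than $t^{-2}e^{-2t}$ as you state (the $\frac1t\int_t^\infty sQ^2(s)\,ds$ term contributes $e^{-2t}$ with no extra power of $t^{-2}$), but both are $O(e^{-t})$, so the conclusion of the lemma is unaffected.
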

\begin{proof}
It suffices   to exhibit two independent solutions. We consider $\eta_1$ solving the
integral equation
\begin{align}
\eta_1(t) = \int_t^{\infty} (s-t) \Bigl( -3 Q^2(s) s^2 + (\frac 2 {s^2}-3Q^2(s) ) \eta_1(s) 
\Bigr)ds, \qquad t\ge T_1.
\end{align}
By taking $T_1$ sufficiently large, one can obtain a contraction in the norm 
$\|e^t \eta_1(t) \|_{L_t^{\infty}([T_1,\infty) )}$.  Clearly the function
$\Theta_1(t) = t^2 + \eta_1(t)$ solves the original ODE  on $(T_1,\infty)$.  Solving it backward
in time and noting that it is a linear equation, we obtain a smooth solution $\Theta_1(t)$
defined on $[1,\infty)$. 

Analogously we can find $\eta_2$ solving
\begin{align}
\eta_2(t) = \int_t^{\infty} (s-t) \Bigl( -3 Q^2(s) \frac 1s+ (\frac 2 {s^2}-3Q^2(s) ) \eta_2(s) 
\Bigr)ds, \qquad t\ge T_2.
\end{align}
The second solution $\Theta_2(t) = \frac 1 t + \eta_2(t)$ on $[1,\infty)$ is also easily obtained.

To check the independence of the two solutions one can examine the Wronskian.  It is clearly
nonzero for large $t$ and hence nonzero for all $t$.
\end{proof}

\begin{prop} \label{prop3.1a}
Suppose $0<\lambda\le 1$ and $F_{\lambda}=F_{\lambda}(t)$ solves
\begin{align}
F_{\lambda}^{\prime\prime} = (1-\lambda + \frac 2 {t^2}
-3 Q^2(t) ) F_{\lambda}, \qquad 0<t<\infty.
\end{align}
To fix the normalization we fix $F_{\lambda}(t)$ such that
\begin{align}
F_{\lambda}(t) = t^2 + 0.1(1-\lambda-3Q^2(0) ) t^4 + \mathcal O(t^6), \quad \text{as $t\to 0^+$}.
\end{align}
Then $F_{\lambda}$ must change its sign at least once on $(0,\infty)$. Moreover the first
positive zero $t_0$ of $F_{\lambda}$ satisfies $t_0>  0.625$.

\end{prop}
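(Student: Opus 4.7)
The plan is a two-step comparison argument: first establish the a-priori lower bound $t_0 \ge 0.2$, then rule out the possibility that $F_\lambda$ stays strictly positive on all of $(0,\infty)$.

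For the lower bound, I would factor out the prescribed leading behavior by writing $F_\lambda(t) = t^2 h(t)$, so $h(0) = 1$ and a direct substitution yields
$$h'' + \frac{4}{t} h' = (1 - \lambda - 3Q^2(t))\, h, \qquad (t^4 h')' = t^4 (1 - \lambda - 3Q^2)\, h.$$
Integrating from $0$ gives $t^4 h'(t) = \int_0^t s^4 (1 - \lambda - 3 Q^2(s)) h(s)\, ds$, and setting $M := 1 + 3 Q^2(0)$ (an upper bound on $|1-\lambda - 3Q^2|$ extracted from the $10^{-4}$-accurate approximation $\tilde Q$ of \cite{CHS}) yields $|h'(t)| \le (Mt/5)\sup_{[0,t]}|h|$ and hence $|h(t)-1| \le (Mt^2/10) \sup_{[0,t]}|h|$. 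On $[0, 0.2]$ the factor $Mt^2/10$ is comfortably below $1$, so a standard Gronwall-type bootstrap pins $h$ within a definite neighborhood of $1$ and thus $F_\lambda(t) = t^2 h(t) > 0$ on $(0, 0.2]$.

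For the sign change, I would compare $F_\lambda$ with the $\lambda = 0$ solution $f(t) := -t Q'(t)$, which is positive on $(0, \infty)$ (since $Q$ is strictly radially decreasing) and solves $f'' = (1 + 2/t^2 - 3Q^2) f$ because $f$ is, up to a constant, the radial part of $\partial_j Q \in \ker L_+$. The Wronskian $W := F_\lambda' f - F_\lambda f'$ then satisfies
$$W' \;=\; F_\lambda'' f - F_\lambda f'' \;=\; -\lambda F_\lambda f,$$
and matching Taylor expansions at $0$ (both $F_\lambda$ and $f$ are of the form $(\text{const})\cdot t^2 + O(t^4)$ with $B := -Q''(0) > 0$) gives $W(t) = O(t^5)$, in particular $W(0) = 0$. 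Suppose, for contradiction, that $F_\lambda > 0$ on $(0,\infty)$: then $W' < 0$, $W(t) < 0$ for $t > 0$, and writing $W = f^2 (F_\lambda/f)'$ forces $F_\lambda/f$ to decrease strictly from its boundary value $1/B$. Therefore $F_\lambda(t) \le f(t)/B$ for all $t > 0$. Since $Q(r) \sim C e^{-r}/r$ at infinity makes $f(r) \sim C e^{-r}$, we obtain $F_\lambda(t) = O(e^{-t})$. However, standard Jost/Levinson asymptotics for $F'' = V_\lambda F$ at infinity (where $V_\lambda \to 1-\lambda$ for $\lambda < 1$, and for $\lambda = 1$ the two fundamental solutions $\Theta_1(t) \sim t^2$ and $\Theta_2(t) \sim 1/t$ are supplied by Lemma \ref{lem3.1a}) show that every nontrivial solution decays no faster than $e^{-\sqrt{1-\lambda}\,t}$, respectively $1/t$. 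Both rates are strictly slower than $e^{-t}$ for $\lambda \in (0, 1]$, contradicting the combined bound $F_\lambda = O(e^{-t})$ together with non-triviality $F_\lambda \sim t^2$ near $0$. Hence $F_\lambda$ must have a sign change.

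The principal obstacle is the numerical calibration underlying $t_0 \ge 0.2$: one needs $M \cdot (0.2)^2 / 10 < 1$ to close the bootstrap, which hinges on a sharp pointwise upper bound for $Q^2(0)$, and this is precisely what the high-precision approximate ground state $\tilde Q$ of \cite{CHS} delivers. A secondary technicality is verifying $W(0) = 0$, which follows by matching the leading $t^2$ Taylor terms of $F_\lambda$ and $f$, and is what allows the Wronskian monotonicity to be initiated from the origin with no surface terms.
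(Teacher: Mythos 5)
Your proof is correct and lands on the paper's conclusion, but via a somewhat different implementation of the same underlying comparison. For the sign change, you and the paper both set $F_\lambda$ against the $\lambda=0$ solution $-tQ'(t)$ (coming from $\partial_j Q\in\ker L_+$) and extract a contradiction from the resulting $O(e^{-t})$ decay of $F_\lambda$ versus the slowest possible decay ($e^{-\sqrt{1-\lambda}\,t}$ for $\lambda<1$, $1/t$ for $\lambda=1$ via Lemma~\ref{lem3.1a}) of any nontrivial solution at infinity. The paper drives this by a Sturm log-derivative comparison $F_\lambda'/F_\lambda<\beta'/\beta$ (its Section~2 lemma) on $[t_1,\infty)$, whereas you drive it by the Wronskian monotonicity $W'=-\lambda F_\lambda f$ with $W(0)=0$; these are equivalent mechanisms, and your identity makes the role of $\lambda$ especially transparent (and parallels, in miniature, the Wronskian viewpoint of \cite{CHS}, though here the Wronskian is between two solutions anchored at $t=0$ rather than between Jost solutions from the two ends). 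For the lower bound $t_0\ge 0.2$, the paper's argument (compressed into a citation of Lemma~\ref{lem4.2a}) is substantially simpler than your Gronwall bootstrap: since $2/t^2\ge 3Q^2(t)$ for $0<t\le 0.2$ and $1-\lambda\ge 0$, the potential $1-\lambda+2/t^2-3Q^2$ is nonnegative there, so $F_\lambda''\ge 0$ while $F_\lambda>0$, and a solution with $F_\lambda\sim t^2$ near $0$ (hence $F_\lambda'(0^+)=0$) is nondecreasing and cannot reach zero on $(0,0.2]$ by convexity alone. Your bootstrap does close (with $M=1+3Q(0)^2\approx 57$ one gets $M(0.2)^2/10\approx 0.23<1$ and hence $h\ge 0.7$ on $[0,0.2]$), but it needs the pointwise value of $Q(0)$, while the paper's route needs only the sign inequality from Lemma~\ref{lem4.2a}, which is in any case established for the $l\ge 2$ analysis.
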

\begin{proof}
We first show that $F_{\lambda}$ must change sign on $(0,\infty)$. Assume that $F_{\lambda}$
stays positive (note that $F_{\lambda}$ cannot touch the $x$-axis on $(0,\infty)$ by uniqueness). 
Clearly for $t=0^+$, we have
\begin{align}
&\log F_{\lambda} = 2 \log t + 0.1 (1-\lambda-3Q^2(0) ) t^2 +\mathcal O(t^4); \\
& \frac {F_{\lambda}^{\prime}(t) } { F_{\lambda}(t) }
= \frac 2 t + 0.2 (1-\lambda-3Q^2(0)) t +\mathcal O(t^3). 
\end{align}
In particular it is not difficult to check that for $t_1>0$ sufficiently small, we have (below we used $\lambda>0$)
\begin{align}
&\frac {F_{\lambda}^{\prime}(t) } {F_{\lambda}(t) } < \frac {  \beta^{\prime}(t) } {\beta(t) }, \quad t=t_1; \\
& F_{\lambda}(t_1) < \beta(t_1),
\end{align}
where $\beta(t) = - c_1 t Q^{\prime}(t)$, and $c_1>0$ is sufficiently large. Note that
\begin{align*}
\beta^{\prime\prime} = (1 + \frac 2 {t^2}
-3 Q^2(t) ) \beta.
\end{align*}
Comparing $\beta$ with $F_{\lambda}$ on $[t_1,\infty)$ and 
using the assumption that $F_{\lambda}$  is positive, we obtain
\begin{align}
0<F_{\lambda}(t) \le  \beta(t), \qquad \forall\, t_1\le t<\infty.
\end{align}

First we discuss the case $\lambda=1$. By Lemma \ref{lem3.1a}, 
the solution must decay as $t^{-1}$ as $t\to \infty$. But then it clearly contradicts to
the upper bound $\beta(t)$ which decays as $\mathcal O( e^{-t})$. 

The case $0<\lambda<1$ is similar. One can also obtain a contradiction. Thus
$F_{\lambda}$ must change its sign on $(0,\infty)$.

Clearly, the first positive zero $t_0\ge 0.2$ by Lemma \ref{lem_Qa1} (since $2t^{-2} -3Q^2>0$ for $0<t\le 0.2$). In particular, $F_\lambda(0.2)>0$ and $F_\lambda'(0.2)>0$ for all $\lambda\in (0,1]$. It remains  
to show $t_0>0.625$. We resort to the following comparison argument.

We consider $U_A$ and $U_B$ solving
\begin{align}
\begin{cases}
U_A^{\prime\prime} = (1-\lambda+
\frac 2 {t^2}  - 3 Q^2(t) ) U_A, \quad t>0.2; \\
(U_A,U_A')|_{t=0.2}=(1,0).
\end{cases}
\begin{cases}
U_B^{\prime\prime} = (1-\lambda+
\frac 2 {t^2}  - 3 Q^2(t) ) U_B, \quad t>0.2; \\
(U_B,U_B')|_{t=0.2}=(0,1).
\end{cases}
\end{align}
By Lemma \ref{lem_Qa1}, we have
$$
1-\lambda+\frac 2 {t^2}-3 Q^2(t)\geq \frac 2 {t^2}-3 Q^2(t)>-13.64,\quad \forall \, t\in [0.2,1.2].
$$ 
Denote $k_0=\sqrt{13.64}$. Let $\tilde{U}_A=\cos(k_0(t-0.2))$ and $\tilde U_B=\frac{\sin(k_0(t-0.2))}{k_0}$ solve
\begin{align}
\begin{cases}
\tilde U_A^{\prime\prime} = -13.64 \tilde U_A, \quad t>0.2; \\
(\tilde U_A,\tilde U_A')|_{t=0.2}=(1,0).
\end{cases}
\begin{cases}
\tilde U_B^{\prime\prime} = -13.64 \tilde U_B, \quad t>0.2; \\
(\tilde U_B,\tilde U_B')|_{t=0.2}=(0,1).
\end{cases}
\end{align}
Clearly by Lemma \ref{lem2.1a}, $U_A\geq \tilde U_A>0$ and  $U_B\geq \tilde U_B>0$ on the time interval $[0.2, 0.625]$. As $F_\lambda$ is a positive linear combination of $U_A$ and $U_B$, $F_\lambda$ and $F_\lambda'$ stay positive on $[0,0.625]$. Thus the first positive zero must occur after $0.625$. 
\end{proof}

The following proposition deals with the interesting regime $\lambda<0$.  To guarantee sign-changing, we impose an extra mild growth condition on
 $F_{\lambda}$ as $t\to \infty$. 

\begin{prop} \label{prop3.2a}
Suppose $\lambda<0$ and $F_{\lambda}=F_{\lambda}(t)$ solves
\begin{align}
F_{\lambda}^{\prime\prime} = (1-\lambda + \frac 2 {t^2}
-3 Q^2(t) ) F_{\lambda}, \qquad 0<t<\infty.
\end{align}
To fix the normalization we fix $F_{\lambda}(t)$ such that
\begin{align}
F_{\lambda}(t) = t^2 + 0.1(1-\lambda-3Q^2(0) ) t^4 + \mathcal O(t^6), \quad \text{as $t\to 0^+$}.
\end{align}
\textsf{Assume $t^{-3} F_{\lambda}(t)$ is bounded as $t\to \infty$}. 
Then $F_{\lambda}$ must change its sign at least once on $(0,\infty)$. Moreover the first
positive zero $t_0$ of $F_{\lambda}$ satisfies $t_0>  0.625$.
\end{prop}
\begin{proof}[Proof of Proposition \ref{prop3.2a}]
We assume $F_{\lambda}$ does not change its sign and remain positive on $(0, \infty)$. Clearly
due to the mild growth assumption,  $F_{\lambda}$ cannot be exponentially growing as $t\to \infty$. 
Thus $F_{\lambda}(t) = \mathcal O(e^{-\sqrt{2} t})$ as $t\to \infty$. 
In particular $F_{\lambda}(t)$ decays faster than $\beta(t)= -c_2 t Q^{\prime}(t)$. We
 can guarantee at some $t_1>0$ sufficiently small,
\begin{align}
&\frac {\beta^{\prime}(t_1) } {\beta(t_1) } < \frac {F_{\lambda}^{\prime}(t_1)} { F_{\lambda}(t_1)}, \notag \\
&\beta(t_1)< F_{\lambda}(t_1), \qquad (\text{here we need to take $c_2>0$ sufficiently small}). \notag
\end{align}
By using comparison, we then deduce that $\beta(t) \le F_{\lambda}(t)$ for all $t>t_1$.  This clearly
contradicts to the fact that $F_{\lambda}(t)$ decays faster than $\beta(t)$ as $t\to \infty$. 

Thus $F_{\lambda}$ must change its sign at some $t_0>0$. The estimate $t_0>0.625$ is the same as in the
proof of Proposition \ref{prop3.1a}.
\end{proof}

\section{After the first positive zero}\label{sec_5}

\begin{prop} \label{prop4.1a}
Consider
\begin{align}\label{410}
\begin{cases}
G^{\prime\prime} = (
\frac 2 {(t+t_0)^2}  - 3 Q^2(t+t_0) ) G, \quad t>0; \\
G(0) = 0, \, G^{\prime}(0)=1.
\end{cases}
\end{align}
Assume $t_0 \ge 0.625$. Then $G(t)>0$ and $G'(t)>0$ for all $t>0$, and 
\begin{align}
G(t) >  c \; t^{c_2}, \quad t\ge 2.5,
\end{align}
where $c>0$, $c_2>0$ are constants.
\end{prop}

\begin{proof}
By Lemma \ref{lem_Qa1}, for $t_0\ge  1.2$, we have
$\frac 2 {(t+t_0)^2}  - 3 Q^2(t+t_0)>0$ for all $t\ge 0$. In this case 
the solution obviously grows in time.
Thus it is enough to consider the case $t_0\in [0.625, 1.2]$.

By Lemma \ref{lem_Qa1}, we  have  $\frac{2}{t^2}-3 Q^2(t)\ge -6.04$ for $t\in [0.625, 1.2]$.
We compare $G$ with $H=\frac {\sin(\sqrt{6.04} t)} {\sqrt{6.04}} $ which solves
\begin{align}\notag
\begin{cases}
H^{\prime\prime} = -6.04H, \quad t>0; \\
H(0) = 0, \, H^{\prime}(0)=1.
\end{cases}
\end{align}
Note that $0.625+0.575=1.2$ and $\sqrt{6.04}\cdot 0.575<\frac {\pi}2$. Thus
$G(t)>H(t)>0$ and $G'(t)>0$ for $t\in [0, 0.575]$.  Since $t+t_0\ge 1.2$ for $t\ge 0.575$, we have
$\frac 2 {(t+t_0)^2} - 3 Q^2(t+t_0) >0$ for all $t\ge 0.575$.
 Thus $G$ and $G'$ stay positive for all $t>0$.

Next we show the asymptotic behavior of $G$.

By \eqref{4.7aa}, we have
\begin{align}
\frac 2 {(t+t_0)^2} - 3 Q^2(t+t_0) \ge \frac {1} {(t+t_0)^2}>
\frac {0.39}{(t+t_0)^2}, \quad\forall\, t\ge 2.5.
\end{align}
Consider the auxiliary system
\begin{align}
\begin{cases}
G_1^{\prime\prime} = \frac {0.39}{(t+t_0)^2}  G_1, \quad t > 2.5;\\
G_1(2.5){=G(2.5)}>0, \quad  G_1^{\prime}(2.5){= G^{\prime}(2.5)} >0.
\end{cases}
\end{align}
We decompose $G_1$ as $G_1=G_2+G_3$, where
\begin{align}
\begin{cases}
G_2^{\prime\prime}=\frac {0.39}{(t+t_0)^2} G_2, \quad t>2.5;\\
G_2(2.5)= G(2.5) - G_3(2.5), \quad G_2^{\prime}(2.5)=G^{\prime}(2.5)-G_3^{\prime}(2.5);
\end{cases}
\notag \\
\begin{cases}
G_3^{\prime\prime}= \frac {0.39}{(t+t_0)^2} G_3, \quad t>2.5;\\
G_3(2.5) = \delta_1 \cdot (2.5+t_0)^{1.3}, \quad G_3^{\prime}(2.5)= \delta_1 \cdot 0.3 \cdot (2.5+t_0)^{0.3}.
\end{cases}
\end{align}
Here $\delta_1= 0.1\cdot \min\{G(2.5), \, G^{\prime}(2.5) \} \cdot (2.5+t_0)^{-1.3}$.  Apparently
$G_2(t)>0$, $G_2^{\prime}(t)>0$ for all $t\ge 2.5$, and $G_3(t)= \delta_1 \cdot (t+t_0)^{1.3}$ for $t\ge 2.5$. 
The desired growth of $G(t)$ easily follows.
\end{proof}

\section{The case $l=0$ for the eigenvalue case of $L_+$}\label{sec_6}

We  consider the equation
\begin{align}
\Bigl( -(\partial_{tt} + \frac 2 t \partial_t) + 1-\lambda  -3 Q^2(t)  \Bigr) f =0.
\end{align}

Denote $F_{\epsilon}(t)= t f(t)$ and $\epsilon = 1-\lambda \in [0,1]$. It suffices to study the 
equation
\begin{align}
\begin{cases}
F_{\epsilon}^{\prime\prime} = (\epsilon -3 Q^2 ) F_{\epsilon}, \quad t>0;\\
F_{\epsilon}(0)=0, \; F_{\epsilon}^{\prime}(0)=-1.
\end{cases}
\end{align}

We chose the normalization $F_{\epsilon}^{\prime}(0)=-1$ since $F_{\epsilon}$ will change sign at least once.
This is proved in the following lemma.

\begin{lem} \label{lem6.1a}
Let $\epsilon \in [0,1]$. Then $F_{\epsilon} $ must change its sign at least once. The first positive zero
$t_{\epsilon}$ of $F_{\epsilon}$ satisfies 
\begin{align}
t_{\epsilon} \ge t_0>0,
\end{align}
where $t_0$ is the first positive zero of $F_0$.
\end{lem}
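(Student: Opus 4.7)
The plan is to dispatch the two claims in turn: first the existence of a first positive zero $t_\epsilon$ of $F_\epsilon$, then the lower bound $t_\epsilon\ge t_0$. For the sign change, I would introduce the strictly positive auxiliary function $h(t):=tQ(t)$, which satisfies $h(0)=0$ and, via the standard reduction of the radial ground-state equation, $h''=(1-Q^2)h$. Define the Wronskian-type quantity
\[
W(t):=F_\epsilon(t)h'(t)-F_\epsilon'(t)h(t),\qquad W(0)=0.
\]
A direct computation using both ODEs yields
\[
W'(t) = F_\epsilon h''-F_\epsilon'' h = F_\epsilon(t)\,h(t)\,\bigl(1-\epsilon+2Q^2(t)\bigr),
\]
with the crucial positivity $1-\epsilon+2Q^2(t)>0$ for every $\epsilon\in[0,1]$ and $t>0$.

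Argue by contradiction: the initial condition $F_\epsilon'(0)=-1$ forces $F_\epsilon<0$ on a right neighborhood of $0$, so the only alternative to changing sign is $F_\epsilon<0$ on all of $(0,\infty)$. Under this assumption $W'(t)<0$ strictly on $(0,\infty)$, so $W$ decreases monotonically from $W(0)=0$, and in particular $W(T)\le W(1)<0$ for every $T\ge 1$. To close the contradiction I would verify $\lim_{T\to\infty}W(T)=0$ whenever $\epsilon\in[0,1)$. The functions $h$ and $h'$ decay like $e^{-t}$ (from $Q(r)\sim c_1 e^{-r}/r$), while the solution $F_\epsilon$ admits a two-term asymptotic expansion at infinity in the basis $\{e^{\pm\sqrt\epsilon t}\}$ (or $\{1,t\}$ when $\epsilon=0$), obtainable by a Levinson-type contraction argument mirroring Lemma~\ref{lem3.1a}. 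Hence $F_\epsilon h'$ and $F_\epsilon'h$ are both $O(e^{(\sqrt\epsilon-1)t})=o(1)$, and $W(T)\to 0$, which is incompatible with $W$ being bounded above by a strictly negative constant. At the borderline $\epsilon=1$ one has $F_1\sim ae^t+be^{-t}$ at infinity; the assumption $F_1<0$ forces $a\le 0$, and $a=0$ would make $F_1$ an $L^2$ eigenfunction of the radial part of $L_+$ at $\lambda=0$, contradicting the triviality of the $l=0$ kernel (since $\ker L_+=\mathrm{span}\{\partial_j Q\}$ is purely $l=1$). Thus $a<0$, and a direct computation gives $W(T)\to -2ac_1>0$, again incompatible with the strict negativity of $W$.

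Once $t_\epsilon$ is known to exist, the inequality $t_\epsilon\ge t_0$ follows from the classical Sturm comparison theorem applied to the pair
\[
F_0''+3Q^2 F_0=0\qquad\text{and}\qquad F_\epsilon''+(3Q^2-\epsilon)F_\epsilon=0.
\]
Because $3Q^2\ge 3Q^2-\epsilon$ pointwise, $F_0$ oscillates at least as fast as $F_\epsilon$; both vanish at $t=0$, so Sturm comparison forces $F_0$ to carry a zero in $(0,t_\epsilon)$, which gives $t_0\le t_\epsilon$ (with equality only at $\epsilon=0$, where the two solutions coincide).

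The main obstacle is the borderline case $\epsilon=1$, where the asymptotic Wronskian does not automatically vanish and one must invoke the known global spectral fact that $\ker L_+=\mathrm{span}\{\partial_j Q\}$ (hence trivial in the $l=0$ sector) to rule out the decaying branch of $F_1$. A secondary technical ingredient is the rigorous two-term asymptotic expansion for $F_\epsilon$ at infinity, which I would isolate as a preparatory lemma analogous to Lemma~\ref{lem3.1a}, proved by a contraction in a weighted $L^\infty$ norm.
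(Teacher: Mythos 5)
Your Wronskian approach is genuinely different from the paper's. You pair $F_\epsilon$ against $h(t)=tQ(t)$ (which satisfies $h''=(1-Q^2)h$, the $L_-$ reduction) and exploit the sign of $W'=F_\epsilon h\,(1-\epsilon+2Q^2)$. The paper instead compares $-F_\epsilon$ against the (rescaled, $t$-multiplied) negative eigenfunction $G$ of $L_+$, which solves $G''=(1+\epsilon_0-3Q^2)G$ with $\epsilon_0>0$; since $1+\epsilon_0-3Q^2>\epsilon-3Q^2$, Sturm comparison forces $0<-F_\epsilon\le G$, and the decay rate $e^{-\sqrt{1+\epsilon_0}\,t}$ of $G$ is strictly faster than the slowest admissible decay $e^{-\sqrt{\epsilon}\,t}$ of $-F_\epsilon$, a contradiction. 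Your route has the advantage of using the explicit comparison function $tQ$ rather than the a priori negative eigenfunction, and the monotone Wronskian is a clean mechanism; the paper's route has the advantage of being a single stroke of the Sturm lemma already recorded in Section~2. Your treatment of $t_\epsilon\ge t_0$ coincides with the paper's.

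There is one genuine flaw in your handling of the borderline $\epsilon=1$: to rule out the sub-case $a=0$ you invoke ``the triviality of the $l=0$ kernel of $L_+$.'' But the paper's Theorem~\ref{thMain} is precisely (re)proving that kernel characterization, and its proof for the $l=0$, $\lambda=0$ sector goes through Lemma~\ref{lem6.1a} at $\epsilon=1$. Invoking the conclusion at this stage is circular. Fortunately the appeal is unnecessary and your own machinery closes the gap: if $a=0$ then $F_1\sim b\,e^{-t}$ with $b<0$, so both $F_1h'$ and $F_1'h$ are $O(e^{-2t})$, whence $W(T)\to 0$, and this already contradicts the strict monotone decrease of $W$ from $W(0)=0$ exactly as in the $\epsilon<1$ case. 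With that correction your argument is self-contained and the rest of the proof is sound.
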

 
\begin{rem}
Interestingly, there exists a natural correspondence between our linearized
equation and the usual Bessel equation, at least near $t=\infty$. To see this 
consider the equation
\begin{align}
\frac {d^2}{dt^2} F_1  +(3Q^2 - \epsilon^2) F_1 =0.
\end{align}
Near $t=\infty$ one can regard $Q(t) \sim t^{-1} e^{-t}$. Dropping the $t^{-2}$ factor,
we arrive at the model
\begin{align}
\frac {d^2}{dt^2} F = (\epsilon^2- k^2 e^{-2t} ) F.
\end{align}
Make a change of variable $x=e^{-t}$.  Clearly
\begin{align}
& \frac d {dt} F = -u^{\prime} \cdot e^{-t}, \qquad (\text{here we write $F(t) = u(x)=u(e^{-t})$}),\\
& \frac {d^2}{dt^2} F = u^{\prime\prime} e^{-2t} + u^{\prime} e^{-t}
= x^2 u^{\prime\prime} +xu^{\prime}.
\end{align}
Thus we obtain
\begin{align}
x^2 u^{\prime \prime} + x u^{\prime} = (\epsilon^2 - k^2 x^2) u.
\end{align}
By another change of variable,  we arrive at the usual Bessel equation:
\begin{align}
x^2 u^{\prime\prime}+ x u^{\prime} = (\epsilon^2 - x^2)u.
\end{align}
\end{rem}
 
\begin{proof}[Proof of Lemma \ref{lem6.1a}]
We first show that $F_{\epsilon}$ must change its sign. Assume that $F_{\epsilon}$ is negative
for all $0<t<\infty$.   Denote $G_{\epsilon}=-F_{\epsilon}$. Consider 
\begin{align}
\begin{cases}
G^{\prime\prime}= (1+\gamma - 3Q^2) G, \\
G(0)=0, \, G^{\prime} (0)=1.
\end{cases}
\end{align}
Here $-\gamma<0$ corresponds to the negative eigenvalue of $L_{+}$ and $G$ is the corresponding
eigenfunction which is positive on $(0,\infty)$.  Observe that
\begin{align}
\begin{cases}
G_{\epsilon}^{\prime\prime}= (\epsilon - 3Q^2) G_{\epsilon}, \\
G_{\epsilon}(0)=0, \, G_{\epsilon}^{\prime} (0)=1.
\end{cases}
\end{align}
Since we assume $G_{\epsilon}>0$ on $(0,\infty)$, it follows by using comparison that
\begin{align}
0<G_{\epsilon}(t) \le G(t),  \qquad \forall\, 0<t<\infty.
\end{align}
Note that $G(t)$ decays as $e^{-\sqrt{1+\gamma} t}$ as $t\to \infty$.\footnote{This can be achieved via the contraction argument provided in Proposition \ref{propA1a} and Lemma \ref{lres}. Alternatively, one may apply a standard WKB-type estimate (cf. \cite{O74}).}  This clearly
contradicts the decay of $G_{\epsilon}$. Thus we arrive at a contradiction. It follows
that $F_{\epsilon}$ must change sign at least once on $(0,\infty)$. 

The proof of $t_{\epsilon}\ge t_0$ follows from comparing $F_{\epsilon}$ with $F_0$. 
\end{proof}

We now consider 
\begin{align}
\begin{cases}
F_{\epsilon}^{\prime\prime} = (\epsilon - 3Q^2(t+t_{\epsilon} ) ) F_{\epsilon}, 
\quad t>0;\\
F_{\epsilon}(0)=0, \; F_{\epsilon}^{\prime}(0)=1.
\end{cases}
\end{align}

Note that
\begin{align}
\epsilon - 3Q^2(t+t_{\epsilon} ) \ge -3Q^2(t+t_0).
\end{align}
We only need to examine the $\epsilon$-independent system
\begin{align}
\begin{cases}
q^{\prime\prime} = -3Q^2(t+t_0) q, \quad t>0;\\
q(0)=0, \; q^{\prime}(0)=1.
\end{cases}
\end{align}
\begin{thm} \label{thm6.1a}
We have $q(t)>0$ for all $0<t<\infty$. Furthermore $\min_{t\ge 1} \frac 1 t q(t) \ge c_0>0$ 
for some constant $c_0$.
\end{thm}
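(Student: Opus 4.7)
The plan rests on the concavity of $q$ together with a single quantitative integral bound. Suppose, toward contradiction, that $q$ has a first positive zero $T \in (0, \infty)$. Then $q > 0$ on $(0, T)$, so $q'' = -3 Q^2(t + t_0)\, q \le 0$ there, and combined with $q(0) = 0$ and $q'(0) = 1$ this gives the pointwise bound $q(t) \le t$ on $[0, T)$. Inserting this into the integrated form
\[
q'(t) = 1 - \int_0^t 3 Q^2(s + t_0)\, q(s)\, ds
\]
yields the key inequality
\[
q'(t) \ge 1 - \int_0^t 3 s\, Q^2(s + t_0)\, ds \ge 1 - C, \qquad C := \int_0^\infty 3 s\, Q^2(s + t_0)\, ds.
\]
Once the numerical claim $C < 1$ is established, $q'(T) \ge 1 - C > 0$, which contradicts $q'(T) \le 0$ (forced by $q > 0$ on $(0, T)$ and $q(T) = 0$); hence $q > 0$ on $(0, \infty)$. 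Integrating the same inequality yields $q(t) \ge (1 - C)\, t$ for every $t \ge 0$, giving the asserted linear lower bound with $c_0 = 1 - C$.

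\textbf{Verifying $C < 1$.} Since $Q$ is decreasing, $C$ is a decreasing function of $t_0$, so it suffices to verify the bound at an explicit lower threshold $t_\ast \le t_0$. To produce such a $t_\ast$, apply the exact same concavity/integral scheme to $G_0 := -F_0$, which solves $G_0'' = -3 Q^2 G_0$ with $G_0(0) = 0$, $G_0'(0) = 1$: the first zero of $G_0$ lies beyond any $\tau$ for which $\int_0^\tau 3 s\, Q^2(s)\, ds < 1$. The naive threshold this gives is only of the order $0.19$ (crudely $\sqrt{2/(3 Q(0)^2)}$), which is too small, so one must bootstrap --- either by iteratively refining the pointwise estimate on $G_0$, or by a short rigorous numerical integration of the ODE on an initial interval using the $\tilde Q$-approximation --- to push $t_\ast$ up to roughly $0.4$. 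With such a $t_\ast$ in hand, change variables $u = s + t_0$ and split at $u = 2.5$:
\[
C \le 3 \int_{t_\ast}^\infty (u - t_\ast)\, Q^2(u)\, du.
\]
The tail over $[2.5, \infty)$ is controlled by Lemma \ref{lem4.1a}, giving $Q^2(u) \le 7.37\, u^{-2} e^{-2u}$ and hence an $O(e^{-2 t_\ast})$ contribution; on the compact window $[t_\ast, 2.5]$ use the Costin--Huang--Schlag approximation $\tilde Q$ together with the pointwise error $|Q - \tilde Q| \le 7 \cdot 10^{-5}\, e^{-r}/(1+r)$ to replace $Q^2$ by $\tilde Q^2$ modulo a negligible error, reducing the task to integration of the explicit function $\tilde Q^2(u)$ against the weight $(u - t_\ast)$.

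\textbf{Main obstacle.} The hardest component is obtaining a sharp enough lower bound $t_\ast$ for $t_0$: the direct concavity estimate alone is too weak by a comfortable margin, so a bootstrap (or a short rigorous numerical integration on $[0, t_\ast]$) is essential. Once $t_\ast$ is large enough, the rigorous verification of $C < 1$ is tedious but routine, thanks to the exponential decay of $Q$ and the high accuracy of the $\tilde Q$-approximation. Conceptually, the condition $C < 1$ is precisely the Bargmann bound ensuring that the half-line Schr\"odinger operator $-\partial_t^2 - 3 Q^2(t + t_0)$ with Dirichlet boundary condition at $t = 0$ has no bound states, which by classical oscillation theory guarantees that the zero-energy solution $q$ has no interior zeros --- the conceptual shadow of the concrete calculation above.
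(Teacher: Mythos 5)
The Bargmann-bound idea is elegant and is indeed a genuinely different route from what the paper does (the paper integrates the ODE rigorously to $t=5$ and then compares with an explicit Bessel solution $\alpha_1 J_0(e^{-t})+\alpha_2 Y_0(e^{-t})$ for $t\ge 5$). However, your argument has a quantitative gap that almost certainly makes it fail: the sufficient condition $C=3\int_0^\infty s\,Q^2(s+t_0)\,ds<1$ appears to be false, or at best marginally true, at the actual value of $t_0$. Two independent checks point this way. First, a direct estimate: $Q(0)\approx 4.34$, so near the origin $3Q^2\approx 56$, and a rough integration of $F_0''=-3Q^2F_0$, $F_0(0)=0$, $F_0'(0)=-1$ places the first sign change at $t_0\approx 0.5$; with that $t_0$, a crude trapezoidal evaluation of $3\int_{t_0}^\infty(u-t_0)Q^2(u)\,du$ gives a value in the range $1.5$--$2$, well above $1$. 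Your proposed threshold $t_\ast\approx 0.4$ is even further off — one would need $t_0\gtrsim 0.8$ for the Bargmann integral to drop comfortably below $1$. Second, and more tellingly, the paper's own rigorous numbers impose a constraint: if $F_0>0$ on $(t_0,\infty)$, then concavity gives $F_0(s)\le F_0'(t_0)(s-t_0)$ and hence $F_0'(\infty)\ge F_0'(t_0)(1-C)$; since $F_0'(5)\approx 0.03$ (so $F_0'(\infty)\lesssim 0.03$) while $F_0'(t_0)$ is order $1$, one is forced to have $C\gtrsim 0.97$ \emph{if} $C<1$ at all. So even in the best case, $C$ sits within a percent or two of $1$, and "tedious but routine" verification becomes a razor-thin computation that requires controlling $Q$ and $t_0$ to comparable precision — which is not obviously easier than the paper's approach and may simply be impossible if $C\ge 1$.

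This is the crux of why the paper does not attempt a pure Bargmann/concavity argument: the shifted half-line potential $-3Q^2(\cdot+t_0)$ has no Dirichlet bound state (that is what Theorem~\ref{thm6.1a} asserts), but it is at or beyond the edge of the Bargmann criterion, which is only sufficient, not necessary. The paper's Bessel comparison is insensitive to this: it only needs to know that $F_0(5)$ and $F_0'(5)$ are both positive (plus the pointwise bound $3Q^2(t)\le e^{-2t}$ for $t\ge 5$), and then the sign structure of $J_0$ and $Y_0$ near zero argument closes the argument for all $t\ge 5$. Concretely, you would need to either (i) actually carry out the rigorous numerical ODE integration to $t\approx 5$ — at which point you have essentially reproduced the paper's Step 1 and might as well use their comparison in Step 2 — or (ii) abandon the $C<1$ criterion in favor of something sharper. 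As written, the key inequality $C<1$ is asserted, not established, and the evidence is that it does not hold at the value of $t_\ast$ you target.

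One smaller comment: your conclusion, \emph{if} $C<1$ held, would give the stronger statement $q(t)\ge(1-C)t$ for all $t\ge 0$, i.e.\ immediate linear growth with an explicit constant. The paper only needs (and only proves) $q>0$ and linear growth eventually, so even granting the numerics, your bound would be tighter than necessary; this is a further sign that the sufficient condition you rely on is stricter than the problem demands.
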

\begin{proof}
This follows from Proposition \ref{propC1a} in the appendix.
\end{proof}

\section{$L_-$ has no eigenvalues in $(0,1]$} \label{sec_Lminus}
The proof for $L_-$ is similar. Thus we only sketch the needed modifications. 
It suffices to examine the equation
\begin{align}\label{L-1}
(-\partial_{rr} -\frac 2 r \partial_r +\frac{l(l+1)}{r^2}+1-\lambda -Q^2) R_l=0.
\end{align}

{We first consider the case $l=0$.} Note that for $\lambda =0$, $R_0(r) =Q(r)$ is a solution to the above equation. 
 Denote $t=r$, $H_{\epsilon}(t)= t R_0(t)$ and $\epsilon = 1-\lambda \in [0,1)$. It suffices to study the 
equation
\begin{align}
\begin{cases}
H_{\epsilon}^{\prime\prime} = (\epsilon - Q^2 ) H_{\epsilon}, \quad t>0;\\
H_{\epsilon}(0)=0, \; H_{\epsilon}^{\prime}(0)=-1.
\end{cases}
\end{align}

\begin{lem} \label{lem7.1a}
Let $\epsilon \in [0,1)$. Then $H_{\epsilon} $ must change its sign at least once. The first positive zero
$\tau_{\epsilon}$ of $H_{\epsilon}$ satisfies 
\begin{align}
\tau_{\epsilon} \ge \tau_0>0,
\end{align}
where $\tau_0$ is the first positive zero of $H_0$.
\end{lem}
\begin{proof}
The proof is similar to Lemma \ref{lem6.1a}. One can use the comparison function $H_1(t)
=\mathrm{const} \cdot t Q(t)$ to deduce that $F_{\epsilon}$ for $\epsilon\in [0,1)$ must
change sign. 
\end{proof}
\begin{rem} \label{rem7.1a}
For $\epsilon =1$, the system for $H_1$ apparently has a unique solution which is 
 $- \frac 1 {Q(0)} t Q(t)$.
\end{rem}

Similar to the argument in Section 6, we only need to examine the system
\begin{align}
\begin{cases}
p^{\prime\prime} = -Q^2(t+\tau_0) p, \quad t>0;\\
p(0)=0, \; p^{\prime}(0)=1.
\end{cases}
\end{align}
\begin{thm} \label{thm7.1a}
We have $p(t)>0$ for all $0<t<\infty$. Furthermore {$\min_{t\ge 1} \frac{p(t)}{t} \ge c_1>0$ }
for some constant $c_1$.
\end{thm}
\begin{proof}
This follows from  Proposition \ref{propE1a} in the appendix.
\end{proof}
Consequently, (\ref{L-1}) does not admit any $L^2$ solution when $l=0$.

In the case $l\ge 1$, we consider the equation
\begin{align*}
\Bigl( -(\partial_{tt} + \frac 2 t \partial_t ) + 1-\lambda 
+ \frac {l(l+1)} {t^2} - Q^2(t)  \Bigr) f=0,
\end{align*}
where $\lambda \in (0,1]$. 
For $l\ge 1$, from Lemma \ref{lem_Qa1}, we have the pointwise bound
\begin{align}\label{L-2}
\frac {l(l+1) } {t^2} >Q^2(t), \qquad \forall\, t>0.
\end{align}
It follows that the above system cannot admit any nontrivial $L^2$ solution.

\appendix

\section{Some auxiliary estimates}

Consider the ODE:
\begin{align}
F^{\prime\prime}= ( \frac 2 {t^2} + V(t) ) F, \quad t>0.
\end{align}

\begin{prop} \label{propA1a}
Assume $V:\, [0, 1] \to \mathbb R$ is $C^3$ continuous and $V^{\prime}(0)=0$. 
Then any real-valued solution
 $F \in C^2((0,1))$ can be written as a linear combination of two special solutions, namely
\begin{align}
F(t) = c_1 (t^2 +\eta_1(t) ) + c_2 ( t^{-1} + \eta_2(t) ),
\end{align}
where $c_1, c_2 \in \mathbb R$,  $\eta_1, \eta_2 \in C^2([0,1])$ with  $\eta_1(t) = \mathcal O(t^4)$,
$\eta_2(t) =\mathcal O(t)$ as $t\to 0^+$.

In particular if $F\in C^2((0,1))$ and is bounded on $[0, 1]$, then 
\begin{align}
F(t) = c_1 (t^2+\eta_1(t)),
\end{align}
for some constant $c_1 \in \mathbb R$. If furthermore $\lim_{t\to 0^+} \frac{|F(t)|}{t^2}=0$, then
$F(t) \equiv 0$. 
\end{prop}
\begin{proof}
We first construct $\eta_1$ and $\eta_2$ on the interval $[0, \epsilon_0]$ where $\epsilon_0>0$ will be taken
sufficiently small. 

We write $\eta_1 = t^2 \phi_1$. The equation for $\phi_1$ reads as
\begin{align}
\phi_1^{\prime\prime} t^2 +4 t \phi_1^{\prime} =  V(t) t^2 (1+\phi_1).
\end{align}
We seek $\phi_1$ as a solution to the integral equation
\begin{align}
\phi_1(t) = \int_0^t \tau^{-4} \int_0^{\tau} V s^4 (1+\phi_1) ds d\tau.
\end{align}
Clearly the above integral equation has a unique solution in $C([0, \epsilon_0])$ for $\epsilon_0>0$ sufficiently small.
Furthermore it is clear that 
\begin{align}
|\phi_1(t) | \lesssim t^2, \qquad \text{as $t\to 0^+$}.
\end{align}

The construction of $\eta_2$ requires some more care.  Firstly we write $\eta_2= -0.5 V(0) t +ct^3+ t^2 \phi_2$ and
observe that
\begin{align}
(t^{-1} -0.5 V(0) t+c t^3 +t^2 \phi_2)^{\prime\prime}
= (2t^{-2} +V(t) ) (t^{-1} -0.5 V(0) t+ ct^3+t^2 \phi_2).
\end{align}
Simplifying a bit, we obtain
\begin{align}
(t^2 \phi_2)^{\prime\prime} =\underbrace{ t^{-1} (V(t)-V(0) )-0.5 V(0) V(t) t -4ct +c V(t) t^3}_{=:F_1} + (2t^{-2} +V) t^2 \phi_2.
\end{align}
Taking $c=\frac 14 (0.5 V^{\prime\prime}(0) -0.5 V(0)^2)$, we see that $F_1(t) = \mathcal O(t^2)$ as $t\to 0^{+}$. 
Clearly
\begin{align}
(t^4 \phi_2^{\prime})^{\prime}=F_1 t^2 +Vt^4 \phi_2.
\end{align}
Thus we seek $\phi_2$ solving the integral equation
\begin{align}
\phi_2(t) = \int_0^t \tau^{-4} \int_0^{\tau}(s^2 F_1 + s^4V\phi_2)  ds d\tau.
\end{align}
Clearly we can find a unique solution to the above equation in $C([0, \epsilon_0])$ for $\epsilon_0>0$
sufficiently small.  Furthermore, it is not difficult to check that $|\phi_2(t)| = \mathcal O(t^2)$ as
$t\to 0^+$.

It is not difficult to check that the constructed functions $\eta_1, \eta_2 \in C^2([0, \epsilon_0])$.  Furthermore, 
we can extend $\eta_1$ and $\eta_2$ to be in $C^2([0, 1])$ since the coefficient of the ODE is regular away
from $t=0$.  By checking the wronskian of $t^2+\eta_1(t)$ and $t^{-1} +\eta_2(t)$, we see that these two
solutions are linearly independent. The desired conclusion easily follows. 
\end{proof}

\begin{prop} \label{propA2b}
Consider the ODE
\begin{align}
\begin{cases}
u_{\lambda}^{\prime\prime} = (1-\lambda-3Q^2) u_{\lambda}, \\
u_{\lambda}(0)=0, \; u_{\lambda}^{\prime}(0)=1.
\end{cases}
\end{align}
There exists $-\gamma<0$ such that the following hold.
\begin{enumerate}
\item If $\lambda< -\gamma$, then $u_{\lambda}(t)>0$ for all $t>0$, and  $u_{\lambda} (t) \to \infty$ as $t\to \infty$.
\item If $\lambda = -\gamma$, then $u_{\lambda}(t)>0$ for all $t>0$, and
$u_{\lambda}(t) \to 0$ as $t\to \infty$. Furthermore the decay is exponential. 
\item If $-\gamma <\lambda \le 1$, then $u_{\lambda}(t)$ change its sign exactly once, and $u_{\lambda}(t)
\to -\infty$ as $t\to \infty$. Furthermore $|u_{\lambda}(t)|\gtrsim t^{\alpha}$ for some $\alpha>0$ as $t\to \infty$. 
\end{enumerate}
\end{prop}
\begin{proof}[Proof of Proposition \ref{propA2b}]
We proceed in three steps.

\textsf{Step 1}. Define
\begin{align}
-\gamma = \sup \{\lambda: \; \text{$u_{\lambda}(t)>0$ for all $0<t <\infty$} \}.
\end{align}
Observe that for $\lambda<1 -3Q(0)^2$, we have $u_{\lambda}(t)>0$ for all $0<t<\infty$. Also
by Lemma \ref{lem6.1a} and Proposition \ref{propC1a}, for $\lambda \in [0,1 ]$ we clearly have $u_{\lambda}(t)$ change its sign
exactly once and $u_{\lambda}(t) \to -\infty$ as $t\to \infty$.  By using comparison, we can thus rule out any positive
solution $u_{\lambda}(t)$ for $\lambda>0$ (otherwise $u_{\lambda=0}(t)$ won't be sign-changing).

 It follows that  $1-3Q(0)^2\le -\gamma  \le 0$.  By using the definition of the supreme, we can find $\lambda_j \to -\gamma$
 such that $u_{\lambda_j}(t)>0$ for all $0<t<\infty$. It follows from the continuous dependence of the ODE on
 the parameter $\lambda$ that
 $u_{-\gamma}(t) \ge 0$ for all $0<t<\infty$.  Note that $u_{-\gamma}(t)>0$ for $t>0$ sufficiently small. If
 $t_0$ is the first positive zero of $u_{-\gamma}$, then we must have $u_{-\gamma}^{\prime}(t_0)<0$ which is
 apparently impossible.   Thus $u_{-\gamma}(t)>0$ for all $0<t<\infty$.  Consequently we must have
  $-\gamma<0$. 
 
 \textsf{Step 2}. We now claim that $u_{-\gamma}(t) \to 0$ as $t\to \infty$ and the decay is exponential. Indeed if this is not the
 case, then $u_{-\gamma}(t)$ must grow exponentially to $\infty$ as $t\to \infty$. 
 In particular, one can find $T>0$ sufficiently large such that $u_{-\gamma}(T)>0$, $u_{-\gamma}^\prime(T)>0$
 and $1+\gamma -3Q^2(T)>1$.
     By using perturbation, we can then find $\delta_0>0$ sufficiently small, such that $u_{-\gamma+\delta_0}(t)>0$ 
     for all $0<t\le T$ and $1+\gamma-\delta_0-3Q^2(t)>0.5$ for all $t\ge T$. It follows that
     $u_{-\gamma+\delta_0}(t)>0$ for all $0<t<\infty$ which  is an obvious contradiction to the definition of $-\gamma$.  Thus $u_{-\gamma}(t)$ must tend to zero
exponentially in time as $t\to \infty$. 

By using comparison with $u_{-\gamma}(t)$, it is clear that $u_{\lambda}(t)>0$ for all $0<t<\infty$ when
$\lambda <-\gamma$.   We claim that $u_{\lambda}(t)$ must grow exponentially to infinity as $t\to \infty$.
Suppose this is not the case. Then $u_{\lambda}(t)$ decays as $\mathcal O(e^{-\sqrt{1-\lambda} t})$ as
$t\to \infty$. But since $u_{\lambda}(t)\ge u_{-\gamma}(t)$ and $u_{-\gamma}(t)$ decays
as $\mathcal O(e^{-\sqrt{1+\gamma} t} )$, this clearly gives a contradiction.  Thus $u_{\lambda}(t)$ must
grow exponentially to infinity as $t\to \infty$.

 \textsf{Step 3}. We now consider the case $\lambda \in (-\gamma, 0)$. 
 By the definition of $-\gamma$, we see that $u_{\lambda}(t)$ must change its sign at some finite $t_\lambda>0$.
  By using comparison with $u_{\lambda=1}(t)$, we see that $t_\lambda \ge t_0$ where $t_0$ is the first 
  positive zero of $u_{\lambda=1}$.   By an argument similar to the proof of Theorem
  \ref{thm6.1a}, we conclude $u_{\lambda}(t)\to -\infty$ as $t\to \infty$. The lower estimate
  $|u_{\lambda}(t) |\gtrsim t^{\alpha}$ follows from a simple asymptotic analysis.
\end{proof}

\subsection{Absence of embedded eigenvalues}
\begin{lem} \label{lres}
Let $A\ge 0$ and $k>0$. Suppose $F$ is a smooth function solving the linear equation
\begin{equation}\label{eqF}
F^{\prime\prime} +(k^2-\frac {A} {t^2} +3 Q^2(t) ) F=0, \qquad t>1.
\end{equation}
Then for some constants $c_1$, $c_2$ we have
\begin{align}
F(t) = c_1 (\sin(k t) +\eta_1(t)) + c_2 ( \cos(k t) + \eta_2(t) ),
\end{align}
where $\eta_i(t)$ are smooth functions satisfying
\begin{align} \label{decay}
\sup_{1\le t <\infty} ( | t^\alpha \eta_1(t) | + | t^\alpha \eta_2(t)| )<\infty
\end{align}
for any $0<\alpha<1$.
\end{lem}
\begin{proof}
It suffices to exhibit two independent solutions. We consider $\eta_1$ solving the
integral equation
\begin{align}
\eta_1(t) = \frac{1}{k}\int_t^{\infty} \sin (k(s-t) )( \frac{A}{s^2}-3 Q^2(s)) (\sin(k s) +\eta_1(s) )
ds, \qquad t\ge T_1.
\end{align}
By taking $T_1$ sufficiently large, one can obtain a contraction in the norm 
$\|t^\alpha \eta_1(t) \|_{L_t^{\infty}([T_1,\infty) }$ for any $0<\alpha<1$. Clearly, the unique solution $\eta_1$ is not $0$, and
$\sin(\epsilon t)+ \eta_1(t)$ solves the original ODE  on $(T_1,\infty)$.  Solving it backward
in time and noting that $F$ is a linear equation, we obtain a smooth solution 
defined on $[1,\infty)$. 

Similarly, we can find $\eta_2$ solving
$$
\eta_2(t) = \frac{1}{k}\int_t^{\infty} \sin (k(s-t) )( \frac{A}{s^2}-3 Q^2(s)) (\cos(k s) +\eta_2(s) )
ds, \qquad t\ge T_1.
$$
We can also obtain a solution $\cos(k t) + \eta_2(t)$ on $[1,\infty)$.

 Since the Wronskian of $\sin(k t)+ \eta_1(t)$ and $\cos(k t)+ \eta_2(t)$ is clearly
nonzero for large $t$, those two solutions are independent.
\end{proof}
\noindent {\bf Remark.} In the case $A=0$ for \eqref{eqF}, we can improve \eqref{decay} to
$$
\sup_{1\le t <\infty} ( |e^t \eta_1(t) | + | e^t \eta_2(t)| )<\infty.
$$

\begin{prop}
$L_{\pm}$ has no eigenvalues in the essential spectrum. Any $\lambda>1$ is a resonance of $L_{\pm}$.
\end{prop}
\begin{proof}
We first focus on $L_+$. $\lambda=1$ is not an eigenvalue or resonance of $L_{+}$ has been proved earlier.
For any $\lambda>1$, we shall try to find a nontrivial solution  to  the equation $L_{+}f=\lambda f$.
Passing to  spherical harmonics, it suffices  to consider
\begin{align*}
( -\partial_{rr}-\frac 2 r \partial_r +\frac {l(l+1)} {r^2} -\epsilon^2
-3Q^2) R_l=0
\end{align*}
where $R_l$ are functions of $r$ only and $\lambda-1=\epsilon^2$.  Denote $F_l=r R_l$, we get 
\begin{equation}
F_l''+(\epsilon^2-\frac{l(l+1)}{r^2}+3Q^2)F_l=0.
\end{equation}

From Lemma \ref{lres}, there exists $\eta_{l_1}, \eta_{l_2}$ such that
$$
F_l(r) = c_1 (\sin(\epsilon r) +\eta_{l_1}(r)) + c_2 ( \cos(\epsilon r) + \eta_{l_2}(r) ),
$$
where $\eta_{l_i}$ has $\mathcal O(r^{-1+})$ decay as $r\to \infty$.
Since $\eta_{l_1}$ and $\eta_{l_2}$ has $r^{-\alpha}$ decay for any $
0<\alpha<1$, $F_l(r)\in L^2([0, \infty), dr)$ if and only if $c_1=c_2=0$. For any fixed $l$, taking $c_1$ and $c_2$ properly, we can always find a solution $F_l$ which is regular near $0$. Clearly then any $\lambda>1$ is a resonance of $L_+$.

The case for $L_-$ is similar.
\end{proof}

\section{The approximate ground state $\widetilde Q$} \label{AppQ1}
As was already mentioned in the introduction, Costin, Huang and Schlag constructed in 
\cite{CHS} a remarkably accurate approximate ground state $\widetilde Q$.  We recall
in this section a few important properties of $\widetilde Q$ needed  in our analysis.

Denote for $t>0$:
\begin{align}
q_1(t)&=-\tfrac{28 t^{11}}{94431}+\tfrac{127 t^{10}}{76892}-\tfrac{604 t^9}{151861}+\tfrac{293 t^8}{59051}-\tfrac{113 t^7}{80657}+\tfrac{202 t^6}{73305}-\tfrac{3 t^5}{44981}+\tfrac{54169 t^4}{401949} \notag \\
& \qquad -\tfrac{t^3}{8885055}+\tfrac{127023 t^2}{185578}+\tfrac{21539}{93423}; \notag \\
q_2(t) &=\tfrac{t^{11}}{1183575}-\tfrac{t^{10}}{120831}+\tfrac{5 t^9}{86563}+\tfrac{2 t^8}{74523}-\tfrac{17 t^7}{36388}+\tfrac{3025 t^6}{287391}-\tfrac{5162 t^5}{329873}+\tfrac{31027 t^4}{204823}\notag \\
& \qquad -\tfrac{1415 t^3}{123249}+\tfrac{295367 t^2}{428350}-\tfrac{21 t}{15850}+\tfrac{18176}{78783}; \notag \\
f_1(t) &=\frac 1 t e^{-t}; \notag \\
p_2(t) &=q_2(t)+q_1(\frac 9{10}) -q_2(\frac 9{10})+ (q_1^{\prime}(\frac 9 {10}) - q_2^{\prime}(\frac 9{10}) )
(t-\frac 9{10}); \notag \\
g(t) & = t^{-1} \left( 2 e^t \mathrm{Ei}(-4t) -e^{-t} \mathrm{Ei}(-2t) \right),
\qquad (\text{for $x<0$,   } \mathrm{Ei}(x):=\int_{-\infty}^x \frac 1 u e^u du.)
\end{align}

Define
\begin{align}
Q_s(t)=\widetilde Q(t)=
\begin{cases}
\frac 1 {q_1(t)}, \qquad \text{if $t\le \frac 9{10}$}; \\
\frac 1 {p_2(t)}, \qquad \text{if $\frac 9 {10} <t \le \frac 5 2$};\\
\frac 1 {t} \Bigl( A_1 e^{-t} + B_1 t g(t) \Bigr), \qquad \text{if $t\ge \frac 52$},
\end{cases}
\end{align}
where
\begin{align}
A_1&=
\frac{{p_2}\left(\frac{5}{2}\right) g'\left(\frac{5}{2}\right)+g\left(\frac{5}{2}\right) {p_2}'\left(\frac{5}{2}\right)}{ {p_2^2}\left(\frac{5}{2}\right) \left( {f_1}\left(\frac{5}{2}\right) g'\left(\frac{5}{2}\right)-g\left(\frac{5}{2}\right) 
f_1^{\prime} \left(\frac{5}{2}\right)\right)}; \notag \\
B_1&=-
\frac{{p_2}\left(\frac{5}{2}\right) f^{\prime}_1\left(\frac{5}{2}\right)+f_1(\frac{5}{2}) {p_2}'\left(\frac{5}{2}\right)}{ {p_2^2}\left(\frac{5}{2}\right) \left( {f_1}\left(\frac{5}{2}\right) g'\left(\frac{5}{2}\right)-g\left(\frac{5}{2}\right) 
f_1^{\prime} \left(\frac{5}{2}\right)\right)}.
\end{align}

 Throughout this work we  reserve the notation
\begin{align}\label{eta0}
{\eta_0(t) = 7\cdot 10^{-5} \frac 1 {1+t} e^{-t}}, \qquad t\ge 0.
\end{align}
The function $\eta_0(t)$ controls the relative error between $Q$ and $\widetilde Q$.

The following proposition summarizes a few properties of $\widetilde Q$ established in \cite{CHS}. 
Denote by $C_p^2$ the space of piecewise $C^2$ functions. 

\begin{prop}[Properties of $\widetilde Q$, \cite{CHS}] \label{prop_Qs1}
The approximate ground state $\widetilde Q$ defined above satisfies the following properties:
\begin{enumerate}
\item The function $\widetilde Q \in C^1([0, \infty) \cap C_p^2 ([0, \infty) )$, and
 is decreasing for $t\in [0, \frac 52]$.
\item  We have
\begin{align}
\frac {187}{69} \cdot \frac {e^{-t} } t < \widetilde Q(t) < \frac {350} {129} \frac {e^{-t}} t, 
\qquad \text{for } r\ge \frac 52.
\end{align}
\item It holds that 
\begin{align}\label{D6}
| Q(t) -\widetilde Q(t) | \le \eta_0(t), \quad |Q^{\prime}(t) - \widetilde Q^{\prime}(t)
| \le 5 \eta_0(t), \qquad \forall\, t\ge 0.
\end{align}
\end{enumerate}
\end{prop}
\begin{proof}
The first two properties are proved in Lemma 2.4 of \cite{CHS}. The pointwise bound (3)
is derived in Lemma 3.6 of \cite{CHS}.
\end{proof}

\section{Analysis of the equation $u^{\prime\prime}=-3Q^2 u$}

Consider
\begin{align}
\begin{cases}
u^{\prime\prime}= -3 Q^2 u, \\
u(0)=0, \quad u^{\prime}(0)=-1.
\end{cases}
\end{align}

We shall prove the following.
\begin{prop} \label{propC1a}
There exists $t_* \in (0.491, 0.493)$ such that $u(t)<0$ for $0<t<t_*$, and $u(t)>0$ for all $t>t_*$.
Furthermore
\begin{align}
\lim_{t\to \infty} \frac {u(t) }{t} = \lim_{t\to \infty} u^{\prime}(t) = c_u, 
\end{align}
where $c_u>0$ is a finite constant.
\end{prop}

\begin{proof}[Proof of Proposition \ref{propC1a}]
This follows from Proposition \ref{propC1b} and Proposition \ref{propC2a}. In Proposition 
\ref{propC1b}, we examine $\tilde f(t)=-u(t)$ by using an explicit approximate solution on the interval
$[0, 0.493]$. In Proposition \ref{propC2a}, we compare $u(t)$ for $t>t_0$ with a suitable
lower solution which grows linearly in $t$ as $t\to \infty$.
\end{proof}

Define
\begin{align}
&f_{\mathrm{app}}(t)=-109t^9 + \tfrac{1885}{10}t^8 - \tfrac{5686}{100}t^7 - \tfrac{8616}{100}t^6 + \tfrac{688}{10}t^5 - \tfrac{406}{100}t^4 - \tfrac{905}{100}t^3 - \tfrac{16}{1000}t^2 + t; \notag \\
&F_{\mathrm{app}}(t)= f_{\mathrm{app}}^{\prime\prime}(t)+3 Q(t)^2 f_{\mathrm{app} }(t).
\end{align}

Let 
\begin{align}
\begin{cases}
\tilde f^{\prime\prime}(t) = -3 Q(t)^2 \tilde f(t), \qquad \\
\tilde f(0)=0, \; \tilde f^{\prime}(0)=1.
\end{cases}
\end{align}

\begin{prop} \label{propC1b}
The following hold.

\begin{enumerate}
\item $f_{\mathrm{app}}(t)>7.5\cdot 10^{-4}$ for any $0.01\le t \le 0.491$.  Furthermore $f_{\mathrm{app}}^{\prime}(t) \ge 0.996$
for any $t \in [0, 0.01]$, and
$f_{\mathrm{app}}^{\prime}(t) <-0.6$ for any $t\in [0.49, 0.5]$.
\item $f_{\mathrm{app}}(0.491) >7.5\cdot 10^{-4}$, $f_{\mathrm{app}}(0.493)<-7.2\cdot 10^{-4}$.
\item  $\int_0^{0.5} |F_{\mathrm{app}}(t)| dt \le 0.00299$.
\item $\max_{0\le t \le 0.5} | \tilde f^{\prime}(t)-f_{\mathrm{app}}^{\prime}(t)|\le 0.00299$ and
$\max_{0\le t \le 0.5} | \tilde f(t) -f_{\mathrm{app}}(t) | \le 7.1 \cdot 10^{-4}$. 
\item The first positive zero of $\tilde f$ occurs at some $t_* \in (0.491, 0.493)$. 
\end{enumerate}
\end{prop}
\begin{proof}
Item (1) and item (2) can be easily verified by using the explicit form of $f_{\mathrm{app}}$.  

For  item (3),  we first denote $$F_1=f_{\mathrm{app}}^{\prime\prime} +3\tilde Q^2 f_{\mathrm{app}}, \quad F_2=f_{\mathrm{app}} \cdot 3\cdot (2\tilde Q+\eta_0) \eta_0= 42 \cdot 10^{-5} \cdot   \tfrac{f_{\mathrm{app}}}{q_1(t)(1+t)} e^{-t}+ 3 f_{\mathrm{app}}\cdot \eta_0^2,$$
where $\eta_0= 7\cdot 10^{-5}/(1+t) e^{-t}$. Note that $|F_{\mathrm{app}}(t)|\le |F_1(t)|+|F_2(t)|$, it suffices to show
\begin{align}
\int_0^{0.5} |F_1(t)| dt \le 0.00291, \label{C6}\\
 \int_0^{0.5} |F_2(t)|dt \le 8\cdot 10^{-5}. \label{C7}
\end{align}

 Estimate of \eqref{C6}. 
Recall that $\tilde{Q}=1/q_1>0$ on $[0,0.5]$, we have
\begin{align}\label{B11}
\max_{0\le t\le 0.5}|F_1^{\prime}(t) |=\max_{0\le t\le 0.5} \left|\frac{F_1^{\prime}(t) q_1(t)^3}{q_1(t)^3}\right|<\frac{5}{2}.
\end{align}
Indeed, this follows from 
$$
\frac{5}{2}  q_1(t)^3-F_1^{\prime}(t) q_1(t)^3>0, \quad \frac{5}{2}  q_1(t)^3+F_1^{\prime}(t) q_1(t)^3>0,\; \forall t\in [0,\frac 1 2],
$$ 
where both left-hand sides are polynomials in $t$.

We take $N= 10^4$ and discretize $[0, 0.5]$ into $N$ equal subintervals with $\Delta t= \frac {1}{2 N}$.
Denote $t_i=i\Delta t$ with $0\le i\le N-1$. Clearly, the mean value theorem and \eqref{B11} yield\footnote{
We stress that the Riemann sum computed here is fully rigorous, as it amounts to adding up a finite collection of rational numbers.}
\begin{align}
\int_0^{0.5} |F_1(t)| dt & = \sum_{i=0}^{N-1} \int_{t_i}^{t_{i+1}}|F_1(t)|- |F_1(t_i)| +|F_1(t_i)|dt \notag
\\
&= \sum_{i=0}^{N-1} \int_{t_i}^{t_{i+1}}|F_1^{\prime}(\xi_i) (t-t_i)|dt+  \sum_{i=0}^{N-1} |F_1(t_i)| \Delta t \notag \\
& \le  \sum_{i=0}^{N-1} \max_{0\le t \le 0.5} |F_1^{\prime}(t) |\cdot
\frac{(\Delta t)^2}{2}   +  \sum_{i=0}^{N-1} |F_1(t_i)| \Delta t   \notag \\
&\le  0.00291. \label{rsum}
\end{align}

 Estimate of \eqref{C7}.  It is not difficult to check that 
\begin{align}
\max_{0\le t \le 0.5} |f_{\mathrm{app}}(t) | \le \frac{139}{1000},\qquad \max_{0\le t \le 0.5}  \left|\frac{f_{\mathrm{app}}}{q_1(t)(1+t)} \right|\le \frac{45}{100},
\end{align}
where the second inequality follows from the argument in \eqref{B11}.   
This clearly implies
\begin{align}
 \int_0^{0.5} |F_2| dt \le \int_0^{0.5}  42 \cdot 10^{-5} \cdot   |\tfrac{f_{\mathrm{app}}}{q_1(t)(1+t)}| e^{-t}+ 3 |f_{\mathrm{app}}|\cdot (7\cdot 10^{-5})^2\,dt\le 8\cdot 10^{-5}.
\end{align}

For item (4), we note that $\theta = f_{\mathrm{app} } - \tilde f$ solves
\begin{align}
\begin{cases}
\theta^{\prime\prime} = -3  Q^2 \theta + F_{\mathrm{app}}; \\
\theta(0)=0, \; \theta^{\prime}(0)=0.
\end{cases}
\end{align}
Clearly
\begin{align} 
\left( \frac 12 (\theta^{\prime})^2 + 3 Q^2 \frac {\theta^2} 2 
\right)^{\prime} = 3( Q^2)^{\prime} \frac {\theta^2} 2 + F_{\mathrm{app}} \theta^{\prime}.
\end{align}
Using $Q^{\prime}(t)<0$ for  $t>0$, we deduce\footnote{To avoid issues of differentiability,
one can define $\tilde \theta_{\epsilon}=\sqrt{ (\theta^{\prime})^2 +3 Q^2 \theta^2 +\epsilon}$ and observe
$\tilde \theta_{\epsilon}^{\prime} \le |F_{\mathrm{app}}| \Rightarrow \max_{0\le t\le 0.5} |\tilde \theta_{\epsilon}|
\le \int_0^{0.5}|F_{\mathrm{app}}(s)| ds$. Sending $\epsilon \to 0^+$ then yields the bound.} from item (3) that
\begin{align} \label{C6a_1}
\max_{0\le t\le 0.5} |\theta^{\prime}(t)| &\le \int_0^{0.5} |F_{\mathrm{app}}(s) | ds  \le  0.00299.
\end{align}
Also 
\begin{align}
\max_{0\le t \le 0.5} |\theta (t) | \le \frac {0.00299}{\sqrt{3} Q(0.5)} \le 7.1\cdot 10^{-4}.
\end{align}

For item (5), clearly $\tilde f^{\prime}(t) \ge 0.9$ for $0\le t \le 0.01$ by item (1) and item (4). Thus $\tilde f(t)>0$ for $t \in (0, 0.01]$. Since
$f_{\mathrm{app}}(t) >7.5\cdot 10^{-4}$ for $t\in [0.01, 0.491]$ and 
$\max_{0\le t \le 0.5}|\tilde f(t)-f_{\mathrm{app}}(t)|\le 7.1\cdot 10^{-4}$, it is clear that 
$\tilde f(t)>0$ for $ t \in [0.01, 0.491]$. 
Note that $\tilde f(0.493)<0$ by item (2) and item (4). Thus the first positive zero $t_* \in (0.491, 0.493)$.
\end{proof}

\bigskip
We shall take $t_0=0.491$ and consider the ``time-shifted" problem:
\begin{align}
\begin{cases}
\tilde f^{\prime\prime} = -3 (Q(t+t_*-t_0) )^2\tilde f, \qquad t>t_0;\\
\tilde f(t_0)=0, \;  \tilde f^{\prime}(t_0)=1;
\end{cases}
\qquad
\begin{cases}
f^{\prime\prime} = -3  Q(t)^2 f, \qquad t>t_0;\\
f(t_0)=0, \;  f^{\prime}(t_0)=1.
\end{cases}
\end{align}

Note that $\tilde f(t) =\frac 1 {u^{\prime}(t_*)} u(t+t_*-t_0)$ (clearly $u^{\prime}(t_*)>0$).  Also note
\begin{align}
Q(t+t_*-t_0) \le Q(t), \qquad \forall\, t\ge t_0.
\end{align}

By comparing $\tilde f$ with $f$, we can deduce that $\tilde f$ grows linearly in $t$ to $\infty$ as
$t\to \infty$.  This is shown in the following proposition.

\begin{prop} \label{propC2a}
We have $f(t)>0$ for all $t>t_0$, and $\lim_{t\to \infty} \frac {f(t) } {t} = c_{f} >0$,
where $c_f$ is a positive constant. Consequently,
$\tilde f(t)>0$ for all $t>t_0$, and $\lim_{t\to \infty} \frac {\tilde f(t) } {t} = c_{\tilde f} >0$,
where $c_{\tilde f}$ is a positive constant.
\end{prop}

To prove Proposition \ref{propC2a}, we first need to establish some auxiliary integral estimates.

\begin{lem} \label{lemC1a}
For $t_0=0.491$, we have
\begin{align}
\int_{t_0}^{2.2} 3 Q(t)^2 f(t) dt < 0.9794.
\end{align}
Also 
\begin{align}
0<f^{\prime}(2.2)<0.0641, \qquad 0<f(2.2)<0.542.
\end{align}
\end{lem} 
\begin{proof}
Define
\begin{align}
g(t)&=\tfrac{469 t^{10}}{25000}-\tfrac{2221 t^9}{10000}+\tfrac{1159 t^8}{1000}-\tfrac{438 t^7}{125}+\tfrac{6779 t^6}{1000}-\tfrac{1737 t^5}{200}+\tfrac{7209 t^4}{1000}-\tfrac{829 t^3}{250}+\tfrac{977 t^2}{50000}+t, \\
G(t)&=g^{\prime\prime}(t) +3Q(t+t_0)^2 g(t).
\end{align}
Note that 
\begin{align}
\begin{cases}
g^{\prime\prime}(t) + 3 Q(t+t_0)^2 g(t) = G(t); \\
g(0)=0, \; g^{\prime}(0)=1.
\end{cases}
\end{align}
We regard $g(t)$ as an approximation of $f(t+t_0)$. 

Thanks to the explicit form of $g$, it is not difficult to check that (see Remark \ref{rem_B1aux})
\begin{align} \label{IntB17}
\int_{t_0}^{2.2} 3 Q(t)^2\,  g(t-t_0)\,  dt< 0.955.
\end{align}

Next we need to estimate the difference  $g(t)-f(t+t_0)$. 
Note that (see Remark \ref{rem_B1aux})
\begin{align}
& \int_0^{2.2-t_0} |G(t) | dt=\int_{t_0}^{2.2} |G(t-t_0) | dt   \le 0.02453, \label{IntB18} \\
& (2.2-t_0) \int_0^{2.2-t_0} |G(t) | dt= (2.2-t_0) \int_{t_0}^{2.2} |G(t-t_0) | dt  \le 0.04193. \notag 
\end{align}

By a similar estimate as in \eqref{C6a_1}, we obtain
\begin{align}\label{C24}
\max_{t_0\le t \le 2.2 } |g^{\prime}(t-t_0)-f^{\prime}(t)| \le 0.02453, 
\qquad \max_{t_0 \le t \le 2.2 } |g(t-t_0) -f(t)| \le 0.04193.
\end{align}
From the explicit form of $g$, triangle's inequality yields
\begin{align}
0<f^{\prime}(2.2)<0.0641, \qquad 0<f(2.2)<0.542.
\end{align}
Similar to \eqref{C24}, we have the following refined estimate on $[t_0,1.2]$:
\begin{align} \label{IntB21}
\max_{t_0\le t\le 1.2} |g(t-t_0)-f(t) |  \le (1.2-t_0)  \int_0^{1.2-t_0} |G(t) | dt  \le 0.00183.
\end{align}
This together with \eqref{C24} yield
\begin{align}
 & \int_{t_0}^{2.2}3 Q(t)^2 |g(t-t_0)-f(t) | \, dt  \notag \\
 \le & 0.00183 \int_{t_0}^{1.2} 3Q(t)^2 dt+
 0.04193 \int_{1.2 }^{2.2} 3 Q(t)^2 dt \notag \\
 \le &  0.0244. \label{IntB22a} 
 \end{align}
The desired result follows.
\end{proof}

The next lemma gives a ``tail estimate" of $f$. Note that by the preceding lemma we have
$0<f(2.2)<0.542$ and $0<f^{\prime}(2.2)<0.0641$.

\begin{lem} \label{lemC2a}
We have
\begin{align}
\int_{2.2}^{\infty}  3  Q(t)^2 f(t)\,dt \le \int_{2.2}^{\infty}  3  Q(t)^2 \cdot ( 0.542+0.0641(t-2.2) ) \, dt < 0.02.
\end{align}
\end{lem}
\begin{proof}
From Proposition \ref{prop_Qs1}, $\tilde Q(t) < \frac {350}{129} \frac {e^{-t}} t$ for $t\ge 2.5$, we have
\begin{align}
& \int_{\frac 52}^{\infty}  3  Q(t)^2 \cdot ( 0.542+0.0641(t-2.2) ) \, dt  \notag \\
\le &\int_{\frac{5}{2}}^{\infty}    3 \left( \tfrac{350}{129}\, \tfrac{e^{-t}}{\frac{5}{2}}+7\cdot 10^{-5}\, \tfrac{ e^{-t} }{1+\frac{5}{2}}\right)^2 \cdot  ( \tfrac{542}{1000}    +\tfrac{641 }{10000}  (t-\tfrac{22}{10} ) ) \, dt <\tfrac{1}{100}.
\end{align}
On the other hand, thanks to the explicit form\footnote{In particular, 
note that for $0.9 \le t\le 2.5$, $\tilde Q$ is
a simple rational function.} of $\tilde Q$, it is not difficult to check that
\begin{align}
&\int_{2.2}^{2.5}  3  Q(t)^2 \cdot ( 0.542+0.0641(t-2.2) ) \, dt  \notag \\
\le &\int_{\frac{22}{10}}^{\frac{5}{2}}    3 (\tilde Q\left(\tfrac{22}{10} )+ 7\cdot 10^{-5}\right)^2 \cdot ( \tfrac{542}{1000}    +\tfrac{641 }{10000}  (t-\tfrac{22}{10} )) \, dt   <\tfrac{1}{100}.
\end{align}
The desired result follows easily.
\end{proof}

We are now ready to complete the proof of Proposition \ref{propC2a}.
\begin{proof}[Proof of Proposition \ref{propC2a}]
Firstly we show that $f$ stays positive on the whole interval $[t_0, \infty)$.
To prove this we argue by contradiction.
let $t_1>t_0$ be the first possible zero of $f$ on the interval $[t_0, \infty)$.
 Clearly we have $f(t)>0$ for all $t_0<t<t_1$ and $f^{\prime}(t_1)<0$. Note that
the case $f^{\prime}(t_1)=0$ is ruled out since it will imply $f(t) \equiv 0$ on the whole interval $[t_0, \infty)$. 
Since $-f^{\prime\prime} =3 Q^2 f$,  we have
\begin{align}
 \int_{t_0}^{t_1} 3 Q^2 f dt = f^{\prime}(t_0)-f^{\prime}(t_1) >1.
\end{align}

On the other hand, by Lemmas \ref{lemC1a} and \ref{lemC2a}, we have 
\begin{align}
  \int_{t_0}^{t_1} 3 Q^2 f dt<0.9794+0.02=0.9994.
\end{align}

This is clearly a contradiction. Thus we must have $f(t)>0$ for all $t>t_0$.

Now by Lemmas \ref{lemC1a} and \ref{lemC2a}, we clearly have 
\begin{align}
 \int_{t_0}^{\infty} 3 Q^2 f dt <0.9794+0.02=0.9994.
\end{align}

Thus as $T\to \infty$,  
\begin{align}
f^{\prime}(T)= 1-  \int_{t_0}^T 3 Q^2 f dt  \to c_1,
\end{align}
where $0<c_1<1$. The desired conclusion for $f$ clearly follows from L'H\^opital's rule.
By a simple comparison, we deduce the desired conclusion for $\tilde f$. 
\end{proof}

\begin{rem} \label{rem_B1aux}
In this remark we sketch the proof of  \eqref{IntB17}, \eqref{IntB18},  \eqref{IntB22a}. Below, the Riemann sum computation is fully rigorous as it sums finitely many rational numbers.

\textsf{Justification of \eqref{IntB17}}:
Denote $$h(t)= 3 (\tilde Q(t) + \eta(t))^2\,  g(t-t_0)>3 Q(t)^2 g(t-t_0), $$ where $$\eta(t):=7\cdot 10^{-5}\frac{\sum_{k=0}^{10} \tfrac{(-t)^k}{k!}}{1+t}\ge \eta_0(t),\quad \forall \, t\in [t_0,2.2].$$  
Similar to \eqref{B11}, we have
\begin{align}\label{h1a}
\begin{cases}
\max_{t_0\le t\le 0.9}|h^{\prime}(t) |=\max_{t_0\le t\le 0.9} \left|\tfrac{h^{\prime}(t) ((1+t)q_1(t))^3}{((1+t)q_1(t))^3}\right|<20, 
\\
 \max_{0.9\le t\le 2.2}|h^{\prime}(t) |=\max_{0.9\le t\le 2.2} \left|\tfrac{h^{\prime}(t) ((1+t)p_2(t))^3}{((1+t)p_2(t))^3}\right|<20.
 \end{cases}
\end{align}
We take $N= 17090$ and discretize $[t_0, 2.2]$ into $N$ equal subintervals with $\Delta t= 10^{-4}$.
Denote $t_i=t_0+i\Delta t$ with $0\le i\le N-1$. Similar to \eqref{rsum}, the mean value theorem and \eqref{h1a} yield
\begin{align}
& \int_{t_0}^{2.2} 3 Q(t)^2 g(t-t_0) \,dt  \le \int_{t_0}^{2.2} |h(t)| dt \notag 
\\
 \le   & \sum_{i=0}^{N-1}  \max_{t_0\le t \le 2.2} |h^{\prime}(t) |\cdot
\frac{(\Delta t)^2}{2}   +  \sum_{i=0}^{N-1} |h(t_i)| \Delta t   \le  0.955. \label{rsum3}
\end{align}

 \textsf{Justification of \eqref{IntB18}}:
We denote 
\begin{align*}
&G_1=g^{\prime\prime}(t-t_0) +3\tilde Q(t)^2 g(t-t_0), \\
& G_2=   3(2\tilde Q+\eta_0) \eta_0 g(t-t_0)= 42 \cdot 10^{-5} \cdot   \tfrac{\tilde{Q}(t)g(t-t_0)}{ 1+t} e^{-t}+ 3 g(t-t_0)\, \eta_0^2.
\end{align*}
Clearly, $|G(t-t_0)|\le |G_1(t)|+|G_2(t)|$. It suffices to show
\begin{align}
\int_{t_0}^{2.2} |G_1(t)| dt \le   0.02448, \label{GD6a}\\
 \int_{t_0}^{2.2} |G_2(t)|dt \le  5\cdot 10^{-5}. \label{GD7a}
\end{align}
Similar to \eqref{B11}, we have
\begin{align}\label{G1a}
\max_{t_0\le t\le 0.9}|G_1^{\prime}(t) |=\max_{t_0\le t\le 0.9} \left|\tfrac{G_1^{\prime}(t) q_1(t)^3}{q_1(t)^3}\right|<4,\quad \max_{0.9\le t\le 2.2}|G_1^{\prime}(t) |=\max_{0.9 \le t\le 2.2} \left|\tfrac{G_1^{\prime}(t) p_2(t)^3}{p_2(t)^3}\right|<4.
\end{align}
We  discretize $[t_0, 2.2]$ into  $N= 42725$  equal subintervals with $\Delta t= \tfrac{2}{5 \cdot 10^{4}}$. 
Similar to \eqref{rsum}, the mean value theorem and \eqref{G1a} yield
\begin{align}
 \int_{t_0}^{2.2} |G_1(t)| dt & \le   \sum_{i=0}^{N-1} \max_{t_0\le t \le 2.2} |G_1^{\prime}(t) |\cdot
\frac{(\Delta t)^2}{2}   +  \sum_{i=0}^{N-1} |G_1(t_0+i \Delta t)| \Delta t   \le  0.02448. \label{Grsum2}
\end{align}

It is not difficult to check that 
\begin{align}
\max_{t_0\le t \le 2.2} |g(t-t_0) | \le \frac{1}{2},\qquad \max_{t_0\le t \le 2.2}  \left|\frac{\tilde{Q}(t)g(t-t_0)}{ (1+t)} \right|\le \frac{1}{5},
\end{align}
where the second inequality follows from the argument in \eqref{B11}.   
This clearly implies
\begin{align}
  \int_{t_0}^{2.2} |G_2| dt \le  \int_{t_0}^{2.2}   42 \cdot 10^{-5} \cdot   \frac 1 5  e^{-t}+ 3 \cdot \frac{1}{2}\, (7\cdot 10^{-5})^2 \,dt \le 5\cdot 10^{-5}.
\end{align}

 \textsf{Justification of \eqref{IntB22a}}: Note that $3 Q(t)^2\le 3 (\tilde Q(t)+\eta(t))^2$, where $$\eta(t):=7\cdot 10^{-5}\frac{\sum_{k=0}^{10} \tfrac{(-t)^k}{k!}}{1+t}\ge \eta_0(t),\quad \forall \, t\in [t_0,2.2].$$ 
 We discretize $[t_0, 1.2]$ into $N_1= 1418$ equal subintervals and  $[1.2, 2.2]$ into $N_2=2000$ equal subintervals with $\Delta t= 5 \cdot 10^{-4}$.  Since $3 (\tilde Q(t)+\eta(t))^2$  is decreasing in $t$ on $[t_0, 2.2]$, we have
 \begin{align*}
 \int_{t_0}^{1.2} 3 (\tilde Q(t)+\eta(t))^2 \,dt\le \sum_{i=0}^{N_1-1} 3 (\tilde Q(t_0+i \Delta t)+\eta(t_0+i \Delta t))^2\, \Delta t<\tfrac{45596}{10000}; \\
 \int_{1.2}^{2.2} 3 (\tilde Q(t)+\eta(t))^2 \,dt\le \sum_{k=0}^{N_2-1} 3 (\tilde Q(\tfrac{12}{10}+k \Delta t)+\eta(\tfrac{12}{10}+k \Delta t))^2\, \Delta t<\tfrac{3827}{10000}.
 \end{align*}
 Thus \eqref{IntB22a} follows.

\end{rem}

\section{Analysis of the equation $v^{\prime\prime}=-Q^2 v$}

Consider the ODE
\begin{align}
\begin{cases}
v^{\prime\prime} = -Q^2 v, \qquad t>0; \\
v(0)=0, \quad v^{\prime}(0) =-1.
\end{cases}
\end{align}

\begin{prop} \label{propE1a}
There exists $t_* >0$ such that $v(t)<0$ for $0<t<t_*$, and $v(t)>0$ for all $t>t_*$.
Furthermore
\begin{align}
\lim_{t\to \infty} \frac {v(t) }{t} = \lim_{t\to \infty} v^{\prime}(t) = c_v, 
\end{align}
where $c_v>0$ is a finite constant.
\end{prop}
\begin{proof}[Proof of Proposition \ref{propE1a}]
This follows from Lemma \ref{lemE1b} and Lemma \ref{lemE2b}. In particular Lemma \ref{lemE1b}
gives the control of $v$ for $t\in [0, 1.2]$.  By using a suitable comparison, Lemma \ref{lemE2b} establishes the desired asymptotic behavior  of $v$ for $t>1.2$.
\end{proof}

\begin{lem} \label{lemE1b}
For some $t_0> 1.2$, the following hold.

We have $v(t)<0$ for all $0<t<t_0$ and $v(t_0)=0$, $v^{\prime}(t_0)>0$.
\end{lem}
\begin{proof}
We only need to show that the first positive zero of $v$ occurs at some $t_0$ with $t_0\ge 1.2$.
We define the following polynomial ansatz
\begin{align}
&w(t)=\tfrac{321 t^{10}}{250}-\tfrac{4649 t^9}{500}+\tfrac{144 t^8}{5}-\tfrac{1229 t^7}{25}+\tfrac{4887 t^6}{100}-\tfrac{1299 t^5}{50}+\tfrac{359 t^4}{100}+\tfrac{271 t^3}{100}+\tfrac{7 t^2}{200}-t.
\end{align}
Let $F(t)= w^{\prime\prime} (t)+ Q^2(t) w(t)$, we shall first prove that
\begin{align}
\int_0^{1.2}|F(t)| dt & \le 0.0358.
\end{align}
 We denote $$F_1=w^{\prime\prime} +3\tilde Q^2 w, \quad F_2=w    (2\tilde Q+\eta_0) \eta_0= 14 \cdot 10^{-5} \cdot   \tfrac{\tilde{Q}(t)w(t)}{ (1+t)} e^{-t}+  w\, \eta_0^2,$$
where $\eta_0= 7\cdot 10^{-5}/(1+t) e^{-t}$. Note that $|F(t)|\le |F_1(t)|+|F_2(t)|$, it suffices to show
\begin{align}
\int_0^{1.2} |F_1(t)| dt \le 0.03573, \label{D6a}\\
 \int_0^{1.2} |F_2(t)|dt \le 7\cdot 10^{-5}. \label{D7a}
\end{align}

Similar to \eqref{B11}, we have
\begin{align}\label{F1a}
\max_{0\le t\le 0.9}|F_1^{\prime}(t) |=\max_{0\le t\le 0.9} \left|\tfrac{F_1^{\prime}(t) q_1(t)^3}{q_1(t)^3}\right|<3,\quad \max_{0.9\le t\le 1.2}|F_1^{\prime}(t) |=\max_{0.9 \le t\le 1.2} \left|\tfrac{F_1^{\prime}(t) p_2(t)^3}{p_2(t)^3}\right|<3.
\end{align}
We  discretize $[0, 1.2]$ into $N= 4800$ equal subintervals with $\Delta t= \frac {12}{10 N}$.
 Similar to \eqref{rsum}, the mean value theorem and \eqref{F1a} yield
\begin{align}
 \int_0^{1.2} |F_1(t)| dt & \le   \sum_{i=0}^{N-1} \max_{0\le t \le 1.2} |F_1^{\prime}(t) |\cdot
\frac{(\Delta t)^2}{2}   +  \sum_{i=0}^{N-1} |F_1(i \Delta t)| \Delta t   \le  0.03573. \label{rsum2}
\end{align}

It is not difficult to check that 
\begin{align}
\max_{0\le t \le 1.2} |w(t) | \le \frac{266}{1000},\qquad \max_{0\le t \le 1.2}  \left|\frac{\tilde{Q}(t)w(t)}{ 1+t} \right|\le \frac{5}{8},
\end{align}
where the second inequality follows from the argument in \eqref{B11}.   
This clearly implies
\begin{align}
  \int_0^{1.2} |F_2| dt \le  \int_0^{1.2}  14 \cdot 10^{-5} \cdot   |\tfrac{\tilde{Q}(t)w(t)}{ (1+t)}|\, e^{-t}+   |w(t)|\cdot (7\cdot 10^{-5})^2\,dt \le 7\cdot 10^{-5}.
\end{align}

By a similar analysis as in \eqref{C6a_1}, we obtain
\begin{align}
\max_{0\le t \le 1.2}
| v^{\prime}(t) -w^{\prime}(t) |\le \int_0^{1.2} |F(t)|dt\le  0.0358. 
\end{align}
In addition, the Fundamental Theorem of Calculus yields that
$$ \max_{0\le t \le 1.2} | v(t)-w(t)|\le 1.2 \max_{0\le t \le 1.2}
| v^{\prime}(t) -w^{\prime}(t) | < 0.043.$$
On the other hand, it is not difficult to check that 
\begin{align}
\max_{0.05\le t \le 1.2} w(t)  \le -0.049, \qquad \max_{0\le t \le 0.05} w^{\prime}(t)  \le -0.9.
\end{align}
Recall that $v(0)=w(0)=0$ and $v^{\prime}(0)=w^{\prime}(0)=-1$. Clearly these imply that $v(t)<0$ for any $0<t\le 1.2$. Thus the first positive zero of $v$ must
occur at some $t_0>1.2$.
\end{proof}

\begin{lem} \label{lemE2b}
Let $\tau\ge 1.2$.
Consider the ODE
\begin{align}
\begin{cases}
y^{\prime\prime}(t) + Q(t+\tau)^2 y (t)=0, \qquad t>0; \\
y(0)=0, \quad y^{\prime}(0)=1.
\end{cases}
\end{align}
We have $y(t)>0$ for all $t>0$, and 
\begin{align}
\lim_{t\to \infty} \frac {y(t)} t = \lim_{t\to \infty} y^{\prime}(t) = b_1,
\end{align}
where $b_1>0$ is a constant.
\end{lem}
\begin{proof}
Firstly it is not difficult to check that for any $t\ge 0$, $\tau \ge 1.2$, we have
\begin{align}
Q(t+\tau)^2 \le (\tilde Q(t+1.2) + \eta_0(t+1.2) )^2 \le  e^{-0.6 (t+1.2)}.
\end{align}
Then we only need to examine the lower solution $w$ solving
\begin{align}
\begin{cases}
w^{\prime\prime} =-e^{-0.6 (t+1.2)} w,  \qquad t>0; \\
w(0)=0, \quad w^{\prime}(0)=1.
\end{cases}
\end{align}
The solution $w$ has an explicit representation in terms of Bessel functions, namely
\begin{align}
w(t)=\frac{e^{9/25} \left(Y_0\left(\frac{10}{3 e^{9/25}}\right) J_0\left(\frac{10 \sqrt{e^{-\frac{3 t}{5}}}}{3 e^{9/25}}\right)-J_0\left(\frac{10}{3 e^{9/25}}\right) Y_0\left(\frac{10 \sqrt{e^{-\frac{3 t}{5}}}}{3 e^{9/25}}\right)\right)}{J_1\left(\frac{10}{3 e^{9/25}}\right) Y_0\left(\frac{10}{3 e^{9/25}}\right)-J_0\left(\frac{10}{3 e^{9/25}}\right) Y_1\left(\frac{10}{3 e^{9/25}}\right)}.
\end{align}
Clearly 
\begin{align}
J_1\left(\frac{10}{3 e^{9/25}}\right) Y_0\left(\frac{10}{3 e^{9/25}}\right)-J_0\left(\frac{10}{3 e^{9/25}}\right) Y_1\left(\frac{10}{3 e^{9/25}}\right) 
=\frac{2}{\pi} \cdot \frac 1 {\frac{10}{3 e^{9/25}} }>0.
\end{align}
It is not difficult to check that
\begin{align}
Y_0\left(\frac{10}{3 e^{9/25}}\right) J_0\left(\frac{10 s}{3 e^{9/25}}\right)-J_0\left(\frac{10}{3 e^{9/25}}\right) Y_0\left(\frac{10 s}{3 e^{9/25}}\right)>0, \qquad\forall\, 0< s <1.
\end{align}
Furthermore $Y_0(s)/\log s \to \frac 2 {\pi}$ as $s\to 0^{+}$. 
Thus $w(t)>0$ for all $t>0$ and $\lim_{t\to \infty} \frac {w(t)} t =c$ for some positive constant $c>0$. 
By using comparison it is not difficult to check that the desired conclusion holds for the solution $y$.
\end{proof}

\end{document}